
\documentclass[12pt]{amsart}
\usepackage{amssymb, amsmath}
\usepackage{relsize}

\usepackage{mathrsfs}
\usepackage[T1]{fontenc}    
\usepackage{hyperref}       
\usepackage{url}            
\usepackage{booktabs}       
\usepackage{amsfonts}       
\usepackage{nicefrac}       
\usepackage{microtype}      
\usepackage{lipsum}
\usepackage{graphicx}

\usepackage{cleveref}  

\usepackage{pst-node}
\usepackage{tikz-cd} 

\usepackage{mathtools}

\usepackage{verbatim}
\graphicspath{ {./images/} }
\usepackage{xcolor}

\usepackage{pst-node}
\usepackage{tikz-cd} 
\usetikzlibrary{arrows,automata}

\def\multiset#1#2{\ensuremath{\left(\kern-.3em\left(\genfrac{}{}{0pt}{}{#1}{#2}\right)\kern-.3em\right)}}

\numberwithin{equation}{section}

\newcommand{\bc}{\mathbf{c}}
\newcommand{\bd}{\mathbf{d}}

\newcommand{\m}{\mathbf{m}}
\newcommand{\n}{\mathbf{n}}
\newcommand{\br}{\mathbf{r}}

\newcommand{\bk}{\mathbf{k}}
\newcommand{\bi}{\mathbf{i}}

\newcommand{\cA}{\mathcal{A}}
\newcommand{\cC}{\mathcal{C}}

\newcommand{\cL}{\mathcal{L}}
\newcommand{\cM}{\mathcal{M}}

\newcommand{\CI}{\mathscr{I}}

\newcommand{\C}{\mathbb{C}}
\newcommand{\K}{\mathbb{K}}
\newcommand{\N}{\mathbb{N}}
\newcommand{\Q}{\mathbb{Q}}
\newcommand{\Z}{\mathbb{Z}}
\newcommand{\eps}{\varepsilon}
\newcommand{\om}{\omega}
%
%
\DeclareMathOperator{\FI}{FI}
\DeclareMathOperator{\id}{id}
\DeclareMathOperator{\im}{im}
\DeclareMathOperator{\Mon}{Mon}
\DeclareMathOperator{\one}{\mathbf{1}}
\DeclareMathOperator{\Supp}{Supp}

\newtheorem{thm}{Theorem}[section]

\newtheorem{cor}[thm]{Corollary}
\newtheorem{lemma}[thm]{Lemma}
\newtheorem{prop}[thm]{Proposition}
\newtheorem{question}[thm]{Question}

\theoremstyle{definition}
\newtheorem{defn}[thm]{Definition}
\newtheorem{ex}[thm]{Example}
\newtheorem{rem}[thm]{Remark}


%

%
%
\textwidth=16 cm
\textheight=24 cm
\topmargin=-1.5 cm
\oddsidemargin=0.5 cm
\evensidemargin=0.5 cm
\footskip=40 pt

%
%

\title{Equivariant Hilbert series for Hierarchical Models}

\author{Aida~Maraj}
\address{Department of Mathematics, University of Kentucky, 715 Patterson Office Tower, Lexington, KY 40506-0027, USA}
\email{aida.maraj@uky.edu} 
 
\author{Uwe~Nagel}
\address{Department of Mathematics, University of Kentucky, 715 Patterson Office Tower, Lexington, KY 40506-0027, USA}
\email{uwe.nagel@uky.edu} 
 
\begin{document}

\begin{abstract}
Toric ideals to hierarchical models are invariant under the action of a product of symmetric groups. Taking the number of factors, say $m$, into account, we introduce and study invariant filtrations and their equivariant Hilbert series. We present a condition that guarantees that the equivariant Hilbert series is a rational function in $m+1$ variables with rational coefficients. Furthermore we give explicit formulas for the rational functions with coefficients in a number field and an algorithm for determining the rational functions with rational coefficients. A key is to construct finite automata that recognize languages corresponding to invariant filtrations. 
\end{abstract}

\thanks{Both authors were partially supported by Simons Foundation grants \#317096 and \#636513.
}

\keywords{hierarchical model, invariant filtration, equivariant Hilbert series, finite automaton, regular language.}

\maketitle 



\section{Introduction}

Hierarchical models are used in algebraic statistics to determine dependencies among random variables (see, e.g., \cite{Su}). Such a model  is determined by a simplicial complex and the number of states each random variable can take.  The Markov basis to any hierarchical model corresponds to a generating set of an 
associated toric ideal, see \cite{DS}. This toric ideal is rather symmetric, that is, it is invariant under the action of a product of symmetric groups. The number of minimal generators of the toric ideals 
grows rapidly when the number of states of the considered random variables increases. However, 
the Independent Set Theorem (see \Cref{thm:independent set}) shows that the symmetry can be 
leveraged to describe, for a fixed simplicial complex,  simultaneously the generating sets and thus 
Markov bases for all  numbers of states of the random variables. The conceptional proof of this 
result by Hillar and Sullivant \cite{HS} introduces the notion of an $S_{\infty}$-invariant filtration. 
Informally, this is a sequence $(I_n)_{n \in \N}$ of compatible ideals $I_n$ in polynomial rings 
$R_n$ whose number of variables increases with $n$ and where each $I_n$ is invariant under the action 
of a symmetric group that permutes the variables of $R_n$.  To such a filtration, the second author 
and R\"omer \cite{NR} introduced an equivariant Hilbert series in order to analyze simultaneously  
quantitative properties of the ideals in the filtration. It is a formal power series in two variables and they showed that it is rational with rational coefficients \cite[Theorem 7.8]{NR}. 

The variables occurring in the elements of a toric ideal to a hierarchical model can naturally be grouped 
into $m$ sets of variables, where $m$ is the number of random variables. Permuting the variables in any one of these groups gives a group action that leaves the ideal invariant. 
This suggests the introduction of an $S_{\infty}^m$-invariant 
filtration (see \Cref{def:filtration}). For $m = 1$ it specializes to the filtrations mentioned above. 
Every $S_{\infty}^m$-invariant filtration  gives naturally rise to an equivariant Hilbert series defined 
as a formal power series in $m+1$ variables (see \Cref{def:equiv Hilb}). Our main result gives a condition guaranteeing that this power series is a rational function in $m+1$ variables with rational coefficients (see \Cref{thm:main}). Furthermore, we present two methods to determine this rational function. One approach is more special and produces an explicit rational function, but with coefficients in a suitable extension field of the rational numbers (see \Cref{prop:explict computation}). The other approach is much more general and gives directly a formula for the rational function with rational coefficients. It determines the equivariant Hilbert series as the generating function of a regular language (see Section 5). 

The remaining part of this paper is organized as follows. In Section 2, we discuss the symmetry of toric ideals to hierarchical models and introduce $S_{\infty}^m$-invariant filtrations. Their 
equivariant Hilbert series in $m+1$ variables are studied in Section 3. Our main result about such 
Hilbert series is stated as \Cref{thm:main}. We reduce its proof to a special case in that section, but 
postpone the argument for the special case to the following section. In Section 4 we use regular languages and finite automata to establish the special case. The idea is to encode the monomials 
that determine the Hilbert series by a language. We then construct a deterministic finite  
automaton that recognizes this language. Thus, the language is regular. Using a suitable weight 
function we then show that the corresponding generating function of the language is essentially the 
desired Hilbert series. Since generating functions of regular languages are rational this completes 
the argument of our main result. Furthermore, using the finite automaton that describes a regular language, there is an algorithm that determines the generating function of the language explicitly as a rational function with rational coefficients. This is explained and illustrated in Section 5. We also describe in that section a more limited direct approach that gives an explicit formula for the rational function, but with coefficients in a number field. 


\section{Symmetry and Filtrations} 

After reviewing needed concepts and notation we introduce $S_{\infty}^m$-invariant filtrations in this section. 

Throughout this paper we use $\N$ and $\N_0$  to denote the set of positive integers and the  set of non-negative integers, respectively. For any $q \in \N$, we set $[q]=\{1,2,\ldots,q\}$,  and so $[0]= \emptyset$. We use $\# T$ to denote the number of elements in a finite set $T$. 

A \emph{hierarchical model} $\mathcal{M} = \mathcal{M} (\Delta,\mathbf{r})$  with $m$ parameters is given by a collection $\Delta = \{ F_1,F_2, \cdots, F_q\}$ of non-empty subsets $F_j\subset [m]$  with $\bigcup_{j\in[q]} F_j=[m]$ and a vector $\mathbf{r}=(r_1,r_2,\cdots,r_m) \in \N^m$.  Each parameter corresponds to a random variable, and $r_i$ denotes the number of values parameter $i$ can take. We refer to  $\mathbf{r}$ as the \emph{vector of states}. Every set $F_j$  indicates a dependency among the parameters corresponding to its vertices. Thus, we may assume that no $F_j$ is contained in some $F_i$ with $ i \neq j$, and refer to the sets $F_j$ as \emph{facets}.

Diaconis and Sturmfels \cite{DS} pioneered the use of algebraic methods in order to study statistical models. We need some notation. For any subset $F=\{i_1,i_2,\dots,i_s \}\subset [m]$, we write 
\[
\mathbf{r}_F=(r_{i_1},r_{i_2},\dots, r_{i_s}) \in \N^s \; \text{ and } \;  [\mathbf {r}_F]=[r_{i_1}]\times [r_{i_2}]\times  \dots \times [r_{i_s}] \subset \N^s. 
\] 
In particular, $[\mathbf{r}_{[m]}]=[\mathbf{r}] \subset \N^m$. 
Given a field $\mathbb K$ and a hierarchical model $\mathcal{M}=\mathcal{M}(\mathbf{r},\Delta)$,  consider the following ring homomorphism:
\begin{equation} 
    \label{eq:def homo}
\begin{split}
 \Phi_{\mathcal{M}} \colon \boldmath R_{\br}=\mathbb K[x_{\mathbf{i}} \mid  \mathbf{i} \in [\mathbf{r}] ] & \longrightarrow 
 \boldmath S_{\mathcal{M}}= \mathbb K [y_{j,\mathbf{i}_{F_j}} \mid   F_j \in \Delta, \mathbf{i}_{F_j} \in [\mathbf{r}_{F_j}]],  \\
 x_{\mathbf{i}} & \longmapsto \prod_{F_j\in \Delta}y_{j, \mathbf{i}_{ F_j}} 
 \end{split}
 \end{equation}
The  kernel of this homomorphism, denoted $I_{\mathcal{M}}$, is called the \emph{toric ideal}  to the hierarchical model ${\mathcal{M}}$. We also refer to $R_{\br}/I_{\cM}$ as the \emph{coordinate ring} of the model $\cM$. 

In the simplest cases explicit sets of generators of such ideals are known. We use the standard partial order $\le$ on $\Z^s$ given by  $\bi = (i_1,\ldots,i_s) \le \mathbf{j} = (j_1,\ldots,j_s)$ if $i_1 \le j_1,\ldots, i_s \le j_s$. If $q = 1$ then $\Phi_{\cM}$ is an isomorphism, and so $I_{\cM}$ is zero. 

\begin{ex} 
    \label{exa:two facets}
Let $q = 2$, i.e., $\Delta = \{F_1, F_2\}$.   

(i) Suppose first that $F_1$ and $F_2$ are disjoint. Possibly permuting the positions of the entries of a vector $\bi \in [\br] = [\br_{F_1 \cup F_2}]$, we write $x_{\bi_{F_1}, \bi_{F_2}}$ instead of $x_{\bi}$. This corresponds to a bijection $[\br_{F_1 \cup F_2}] \to [\br_{F_1}] \times [\br_{F_2}]$.  Using this notation,  a  generating set of $I_{\mathcal{M}}$ is (see, e.g.,\cite{DG} and \cite{DS})   
\begin{align*}
\hspace{2em}&\hspace{-2em} 
G(\mathcal{M}(\mathbf{r},\{F_1, F_2\})) = \\
& \{ x_{\mathbf{i}_{F_1},\mathbf{i}_{F_2}}x_{\mathbf{i'}_{F_1},\mathbf{i'}_{F_2}}-x_{\mathbf{i}_{F_1},\mathbf{i'}_{F_2}}x_{\mathbf{i'}_{F_1},\mathbf{i}_{F_2}} \:  \mid \: \mathbf{i}_{F_1} < \mathbf{i'}_{F_1}\in [\mathbf{r}_{F_1}],  \ \mathbf{i}_{F_2}< \mathbf{i'}_{F_2} \in [\mathbf{r}_{F_2}]\}
\end{align*}

In the special case, where $m = 2$ and, say, $F_1 = \{1\}, F_2 = \{2\}$, this set becomes 
\[
\{ x_{i_1, i_2} x_{i'_1, i'_2} - x_{i_1, i'_2} x_{i'_1, i_2} \mid 1 \le i_1 \le i'_1 \le r_1, \ 1 \le i_2 \le i'_2 \le r_2 \}, 
\]
which is the set of  $2 \times 2$ minors of a generic $r_1 \times r_2$ matrix with entries 
$x_{i_1, i_2}$. The image of the map  $\Phi_{\mathcal{M}}$ in this case is known in algebraic geometry as the coordinate ring of the Segre product  $\mathbb{P}^{r_1-1} \times \mathbb{P}^{r_2-1}$ whose homogeneous ideal is $I_{\cM}$.   

(ii) Consider now the general case, where $F_1$ and $F_2$ are not necessarily disjoint. Note that $[m]$ is the disjoint union of $F_1 \setminus F_2, F_2 \setminus F_1$ and $F_1 \cap F_2$. Thus,  possibly permuting the positions of the entries of $\bi \in [r]$ as above, 
we write $x_{\mathbf{i}_{F_1\setminus F_2}, \bi_{F_1 \cap F_2} ,\mathbf{i}_{F_2\setminus F_1}}$ for $x_{\bi}$. Fixing a vector $\bc \in [\br_{F_1 \cap F_2}]$, we define a set $G^{\mathbf{c}}(\mathcal{M}(\mathbf{r}_{[m]\setminus F_1\cap F_2},\{F_1\setminus F_2, F_2\setminus F_1\})$ whose elements are 
\[
x_{\mathbf{i}_{F_1\setminus F_2}, \mathbf{c},\mathbf{i}_{F_2\setminus F_1}}x_{\mathbf{i'}_{F_1\setminus F_2}, \mathbf{c},\mathbf{i'}_{F_2\setminus F_1}}-x_{\mathbf{i'}_{F_1\setminus F_2}, \mathbf{c},\mathbf{i}_{F_2\setminus F_1}}x_{\mathbf{i}_{F_1\setminus F_2}, \mathbf{c},\mathbf{i'}_{F_2\setminus F_1}}, 
\]
where 
\[
\mathbf{i} _{F_1\setminus F_2} < \mathbf{i'}_{F_1\setminus F_2} \in [\mathbf{r}_{F_1\setminus F_2}] \text{ and }  \mathbf{i}_{F_2\setminus F_1} < \mathbf{i'} _{F_2\setminus F_1} \in [\mathbf{r}_{F_2\setminus F_1}]. 
\]
The collection 
\[
G(\mathcal{M}(\mathbf{r},\{F_1,F_2\}))=\bigcup\limits_{\mathbf{c}\in[\mathbf{r}_{F_1\cap F_2}]}G^{\mathbf{c}}(\mathcal{M}(\mathbf{r}_{[m]\setminus F_1\cap F_2},\{F_1\setminus F_2, F_2\setminus F_1\}))
\]
is a generating set for the ideal $I_{\mathcal{M}(\mathbf{r},\{F_1,F_2\})}$ (see \cite{D}). 
\end{ex}

Even in the simple cases of \Cref{exa:two facets}, the number of minimal generators of a toric ideal $I_{\cM}$ is large if the entries of $\br$ are large. However, many of these generators have similar shape. This can be made precise using symmetry. 

Indeed, denote by $S_n$ the symmetric group in $n$ letters. Set $S_{[\br]} = S_{r_1} \times S_{r_2} \times \cdots \times S_{r_m}$. This group acts on the polynomial ring $R_{\br}$ by permuting the indices of its variables, that is, 
\[
(\sigma_1,\ldots,\sigma_m) \cdot x_{\bi} = x_{(\sigma_1 (i_1),\ldots,\sigma_m (i_m))}. 
\]
 It is well-known that toric ideals have minimal generating sets consisting of binomials. Thus, the 
definition of the homomorphism $\Phi_{\cM}$ in \eqref{eq:def homo} implies that the ideal 
$I_{\cM}$ is $S_{[\br]}$-invariant, that is, $\sigma \cdot f \in I_{\cM}$ whenever $\sigma \in S_{[\br]}$ 
 and $f \in I_{\cM}$. In some cases, this invariance can be used to obtain  all minimal generators of $I_{\cM}$ 
 from a subset by using symmetry. For example, in the special case $m= q=2$, 
 $F_1 = \{1\}, F_2 = \{2\}$ with $r_1, r_2 \ge 2$, the set $G(\mathcal{M}(\mathbf{r},\{F_1, F_2\}))$ 
 can be obtained from 
 \[
 x_{1,1} x_{2,2} - x_{1,2} x_{2,1} 
 \]
 using the action of $S_{r_1} \times S_{r_2}$. Note that this is true for every vector $\br = (r_1, r_2)$. There is a vast generalization of this observation using the concept of an invariant filtration. 
 
 The symmetric group  $S_n$ is naturally embedded into  $S_{n+1}$ as the stabilizer of $\{n+1\}$. Using this construction componentwise, we get an embedding of $S_{[\br]}$ into $S_{[\br']}$ if $\br \le \br'$. Set 
 \[
 S_{\infty}^m = \bigcup_{\br \in \N^m}S_{[\br]}. 
 \]
 
 \begin{defn}
     \label{def:filtration}
 An \emph{$S_{\infty}^m$-invariant filtration} is a family $(I_{\br})_{\br \in \N^m}$ of ideals $I_{\br} \subset R_{\br}$  such that every ideal $I_{\br}$ is $S_{[\br]}$-invariant and, as subsets of $R_{\br'}$, 
 \[
 S_{[\br']} \cdot I_{\br} \subset I_{\br'} \quad \text{ whenever } \br \le \br'. 
 \]
\end{defn}

Note that fixing $\Delta$, the ideals $(I_{\cM (\Delta, \br)})_{\br \in \N_0^m}$ form an $S_{\infty}^m$-invariant filtration. It is useful to extend these ideas. 

\begin{rem}
    \label{rem:restricted action}
Let $T$ be any non-empy subset of $[m]$. For vectors $\br \in \N^m$, we want to fix the entries in positions supported at $T$, but vary the other entries. To this end write $(\br_{[m] \setminus T}, \br_T)$ instead of $\br$. 

Fix a vector $\bc \in \N^{m \setminus \#T}$. 
Let $I_{\br} \subset R_{\br}$ be an $S_{\infty}^m$-invariant filtration. 
Restricting  $S_{[\br]}$ and its action to components supported at $T$, we get an $S_{\infty}^{\# T}$-invariant filtration of ideals $I_{\br_T} = I_{\bc, \br_T} \subset R_{\bc, \br_T}$ with $\br_T \in \N^{\#T}$. 
\end{rem}

Note that this idea applies to the ideals $I_{\cM (\Delta, \br)}$ with fixed $\Delta$. 
We can now state the mentioned extension of the example given above \Cref{def:filtration}. It is called Independent Set theorem and has been established by Hillar and Sullivant  in \cite[Theorem 4.7]{HS} (see also \cite{DEKL}). 

\begin{thm}
   \label{thm:independent set}
Fix $\Delta$ and consider  a subset $T \subset [m]$ such that $\#(F_j \cap T) \le 1$ for every 
$j \in [q]$. Assume the number of states of every parameter $j \in [m] \setminus T$ is fixed, and 
consider the hierarchical models  $\cM (\Delta, \br_T) = \cM(\Delta, (\bc, \br_T))$, where 
$\bc \in \N^{m - \#T}$. Then the ideals $I_{\cM (\Delta, \br_T)}$ 
form an $S_{\infty}^{\# T}$-invariant filtration 
$\mathscr{I}_{\Delta,\mathbf{r}_{[m] \setminus T}} = (I_{\cM (\Delta, \br_T)})_{\br_T \in \N^{\#T}}$, that 
is, there is some $\bd \in \N^{\#T}$ such that $S_{[\br_T]} \cdot I_{\cM (\Delta, \bd)}$ generates in 
$R_{\bc, \br_T}$ the ideal $I_{\cM (\Delta, \br_T)}$  whenever $\br_T \ge \bd$.
\end{thm}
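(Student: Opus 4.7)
My plan is to analyze the lattice of Markov moves defining $I_{\cM(\Delta, \br_T)}$ and exploit the independent set condition on $T$ to bound the ``$T$-spread'' of these moves, proceeding by induction on $\#T$. Since $I_{\cM(\Delta, \br_T)}$ is a toric ideal, it is generated by binomials $x^{u^+} - x^{u^-}$ indexed by elements $u$ of the lattice $\cL_{\br_T} := \ker A_{\cM(\Delta, \br_T)} \cap \Z^{[\br]}$, where $A_{\cM(\Delta, \br_T)}$ is the integer matrix representing $\Phi_{\cM(\Delta, \br_T)}$ on monomials. The claim thus reduces to showing that once $\br_T \ge \bd$, the lattice $\cL_{\br_T}$ is generated, as a $\Z[S_{[\br_T]}]$-module, by the image of $\cL_{\bd}$ under the natural inclusion.

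For the base case $T = \{t\}$, I would partition the facets as $\Delta^+ = \{F \in \Delta : t \in F\}$ and $\Delta^- = \Delta \setminus \Delta^+$. The independent set condition forces every $F \in \Delta^+$ to contain $t$ as its unique element of $T$, so the variable $y_{F, \bi_F}$ depends on the $t$-slot of $\bi_F$ only through a single index $\bi_t \in [r_t]$, while for $F \in \Delta^-$ the variable $y_{F, \bi_F}$ is entirely independent of $\bi_t$. This decoupling identifies the parameterization with a toric fiber product along a slice at coordinate $t$. The key lemma I would then establish is a quantitative fiber-product statement: every $u \in \cL_{r_t}$ is a $\Z$-linear combination of moves whose $t$-support has cardinality at most $d$, where $d$ depends only on $\Delta$ and the fixed $\bc \in \N^{m - \#T}$. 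The underlying mechanism is a pigeonhole swap: once $u$ spans more than $d$ values at position $t$, two of these values must agree on their $\Delta^-$-projection, and transposing them produces a strictly smaller move.

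For the inductive step, I would fix $t_0 \in T$, apply the base case in the $t_0$-direction (with the remaining $T$-coordinates held fixed) to obtain a bound $d_{t_0}$, and apply the inductive hypothesis to $T \setminus \{t_0\}$ with $r_{t_0}$ held fixed to obtain $\bd' \in \N^{\#T - 1}$. Since $S_{r_{t_0}}$ and $S_{[\br_{T \setminus \{t_0\}}]}$ commute as subgroups of $S_{[\br_T]}$, stacking these bounds into $\bd := (d_{t_0}, \bd')$ would suffice, as a binomial generator of $I_{\cM(\Delta, \br_T)}$ is pulled into an $S_{[\br_T]}$-orbit of $I_{\cM(\Delta, \bd)}$ one coordinate at a time. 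The principal obstacle is the quantitative fiber-product lemma in the base case: abstract equivariant Noetherianity yields finite generation up to symmetry in principle, but pinning down an explicit $d$ from the combinatorics of $\Delta^+$ versus $\Delta^-$ is the crux, and this is where the full strength of the hypothesis $\#(F_j \cap T) \le 1$ is used, not merely the existence of $T$ as an independent set but the fact that each facet contributes at most a single index to every $T$-coordinate.
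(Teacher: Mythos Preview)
The paper does not prove this statement: it is quoted as the Independent Set Theorem of Hillar and Sullivant \cite[Theorem 4.7]{HS} (with a further reference to \cite{DEKL}) and is used only as background motivating the study of $S_\infty^{\#T}$-invariant filtrations. There is therefore no proof in the paper to compare your proposal against.

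That said, your sketch has a genuine gap in the inductive step. When you ``apply the base case in the $t_0$-direction with the remaining $T$-coordinates held fixed,'' the bound $d_{t_0}$ you extract depends on the fixed data $\bc$, and in your setup that fixed data includes the values $\br_{T\setminus\{t_0\}}$. Thus $d_{t_0}$ is a priori a function of $\br_{T\setminus\{t_0\}}$, not a uniform constant, and you cannot simply stack it with $\bd'$ to form a single $\bd$ that works for all $\br_T$. Concretely: the pigeonhole swap you describe in the base case relies on the finiteness of the $\Delta^-$-projection space, but when other $T$-coordinates are free to grow, that space grows with them. This is precisely the obstruction that makes the theorem nontrivial, and it is what forces Hillar and Sullivant to invoke a well-partial-order argument (via their toric version of equivariant Noetherianity) rather than a direct inductive bound. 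Your base case outline is in the right spirit, but the induction as written does not close.
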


In other words, this result says that a generating set of the ideal $I_{\cM(\Delta, \br)}$ can be obtained from a fixed finite minimal generating set of $I_{\cM(\Delta, (\bc, \bd))}$ by applying suitable permutations whenever the number of states of every parameter in $[m] \setminus T$ is large enough. 

\Cref{thm:independent set} is not true without an assumption on the set $T$ (see \cite[Example 4.3]{HS}). 

\begin{rem}
An $S_{\infty}^m$-invariant filtration can also be described using a categorial framework. Indeed, if $m=1$ this approach has been used in \cite{NR2} to study also sequences of modules by using the category $\FI$, whose objects are finite sets and whose morphisms are injections.  This approach can be extended to any $m \ge 1$ using the category $\FI^m$ (see, e.g., \cite{LY} in the case of modules over a fixed ring). For conceptional simplicity we prefer to use invariant filtrations in this paper. 
\end{rem}


\section{Equivariant Hilbert Series} 

In order to study asymptotic properties of ideals in an $S_{\infty}$-invariant filtration, an equivariant 
Hilbert series was introduced in \cite{NR}. Here we study an extension of this concept for 
$S_{\infty}^m$-invariant filtrations. 

We begin by recalling some basic facts. Let $I$ be a homogeneous ideal in a polynomial ring $R$ in finitely many variables over some field $\K$. We will always assume that any variable has degree one. Thus,  $R/I = \oplus_{j \ge 0} [R/I]_j$ is a standard graded $\K$-algebra. Its Hilbert series is the formal power series 
\[ 
H_{R/I} (t)=\sum\limits_{j \ge 0}\dim_{\mathbb{K}} [R/I]_j t^j. 
\]
By Hilbert's theorem (see, e.g., \cite[Corollary 4.1.8]{BH}), it is rational and can be uniquely written as 
\[ 
H_{R/I} (t)= \dfrac{g(t)}{(1-t)^{\dim R/I}} 
\]
with $g(t)\in \mathbb{Z}[t]$ and $g(1) > 0$, unless $I = R$. The number $g(1)$ is called the \emph{degree} of $I$. 

\begin{defn}
       \label{def:equiv Hilb}
The \emph{equivariant Hilbert series} of an $S_{\infty}^m$-invariant filtration $\CI = (I_{\br})_{\br \in \N^m}$ of ideals  $I_{\br} \subset R_{\br}$ is the formal power series in variables $s_1,\ldots,s_m, t$ 
\begin{align*}
equivH_{\mathscr{I}} (s_1,\ldots,s_m, t) & = \sum_{\br \in \N^m} H_{R_{\br}/I_{\br}} (t) \cdot s_1^{r_1} \cdots s_m^{r_m} \\
& = \sum_{\br \in \N^m} \sum_{j \ge 0} \dim_{\K}  [R_{\br}/I_{\br}]_j  \cdot s_1^{r_1} \cdots s_m^{r_m} t^j.  
\end{align*}
\end{defn}

If $m = 1$, that is, $\CI$ is an $S_{\infty}$-invariant filtration, the Hilbert series of $\CI$ is always rational by \cite[Theorem 7.8]{NR} or \cite[Theorem 4.3]{KLS}. For $m \ge 1$, one can also consider another formal power series by focussing on components whose degree is on the diagonal of $\N^m$. This gives 
\begin{align*}
\sum_{r \ge 1} H_{R_{(r,\ldots,r)}/I_{(r,\ldots,r)}} (t) \cdot s^r. 
\end{align*}
It is open whether this formal power series is  rational if $m \ge 2$, even if the ideals are trivial. 

\begin{ex}
Let $m = 2$ and consider the filtration $\CI = (I_{\br})$, where every ideal $I_{\br}$ is zero. 
Since the ring $R_{(r_1,r_2)}$ has dimension $r_1 r_2$, one obtains
\begin{align*}
equivH_{\mathscr{I}} (s_1,s_2, t) & = \sum_{(r_1, r_2) \in \N^2} H_{R_{(r_1, r_2)}} (t) \cdot s_1^{r_1} s_2^{r_2} 
= \sum_{(r_1, r_2) \in \N^2}   \frac{1}{(1 - t)^{r_1 r_2}} \cdot s_1^{r_1} s_2^{r_2}  \\
& = \sum_{r_1 \ge 1} \left [ -1 + \dfrac{(1-t)^{r_1}}{(1-t)^{r_1}-s_2} s_1^{r_1} \right ]. 
\end{align*}
We do not know if this is a rational function in $s_1, s_2$ and $t$. However, if one considers the more standard Hilbert series with $r = r_1 = r_2$ one gets 
\[
\sum_{r \ge 0}  H_{R_{(r, r)}} (t) \cdot s^r = \sum_{n\geq 1} \dfrac{1}{(1-t)^{r^2}} \cdot s^r. 
\]
This is not a rational function because the sequence $\left (\frac{1}{(1-t)^{r^2}} \right)_{r \in \N}$ does not satisfy a finite linear recurrence relation with coefficients in $\Q(t)$. 
\end{ex}

For the remainder of this section we restrict ourselves to considering ideals of hierarchical models 
$\cM (\Delta, \br)$. As pointed out in \Cref{rem:restricted action}, for any subset 
$T \neq \emptyset$ of $[m]$, these ideals give rise to $S_{\infty}^{\# T}$-invariant filtrations. To 
study their equivariant Hilbert series, it is convenient to simplify notation. We may assume that 
$T = \{m- \# T + 1,\ldots,m \}$ and fix  the entries of $\br$ in positions supported on $[m] \setminus  T$, that 
is, we fix $\bc \in \N^{m - \#T}$ and  set $\n = (n_1,\ldots,n_{m - \# T}) = \br_T$ for 
$\br \in \N^m$ to obtain $\br = (\bc, \n)$. We write $\cM (\Delta, \n)$ instead of  
$\cM(\Delta, (\bc, \n))$ and denote the resulting $S_{\infty}^{m - \# T}$-invariant filtration 
$(I_{\cM (\Delta, \n)})_{\n \in \N^{m - \# T}}$ by $\mathscr{I}_{\Delta,\mathbf{r}_{[m] \setminus T}}$, as in the Independent Set theorem. Its equivariant Hilbert series is 
\[
equivH_{\mathscr{I}_{\Delta,\mathbf{r_{[m] \setminus T}}}} (s_1,s_2,\dots,s_{\# T}, t)=\sum\limits_{\n \in \N^{\# T}} H_{R_{(\bc, \n)}/ I_{\cM (\Delta, \n)}} (t) \cdot s_1^{n_1} \cdots   s_{\# T}^{n_{\# T}}.
\]

The Independent Set theorem (\Cref{thm:independent set}) guarantees stabilization of the filtration. This suggests the following problem. 

\begin{question}
        \label{q:independent so rational}
If $T \subset [m]$ satisfies $\# (F \cap T) \le 1$ for every facet $F$ of $\Delta$, is then the 
equivariant Hilbert series of $\mathscr{I}_{\Delta,\mathbf{r}_{[m] \setminus T}}$ rational?\end{question} 

The answer is affirmative if $T$ consists of exactly one element. 

\begin{prop}
     \label{prop: one-element set T}
If $\# T = 1$, then the 
equivariant Hilbert series of $\mathscr{I}_{\Delta,\mathbf{r}_{[m] \setminus T}}$ is rational. 
\end{prop}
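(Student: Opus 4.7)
The plan is to reduce the statement directly to the one-parameter rationality theorem of the second author and R\"omer, namely \cite[Theorem 7.8]{NR}. Write $T = \{t\}$, fix $\bc = \br_{[m]\setminus T} \in \N^{m-1}$, and let $n := r_t$ vary over $\N$. The variables of $R_{(\bc, n)}$ are indexed by $[\bc] \times [n]$, so that $R_{(\bc, n)}$ is a polynomial ring in $k \cdot n$ variables, where $k = \#[\bc] = c_1 \cdots c_{m-1}$ is the fixed ``width''. The group $S_n$ acts on $R_{(\bc, n)}$ by simultaneously permuting the second index of $x_{(\mathbf{j}, i)}$ across all $\mathbf{j} \in [\bc]$, and the standard embeddings $S_n \hookrightarrow S_{n+1}$ and $R_{(\bc, n)} \hookrightarrow R_{(\bc, n+1)}$ are compatible.

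Under this identification, the family $\mathscr{I}_{\Delta, \br_{[m]\setminus T}} = (I_{\cM(\Delta, (\bc, n))})_{n \in \N}$ is an $S_\infty$-invariant filtration in the sense of \cite{NR}: each ideal is $S_n$-invariant because the binomial generators of $\ker \Phi_{\cM}$ from \eqref{eq:def homo} are preserved by any permutation of the $n$-level index, and compatibility under $n \mapsto n+1$ is exactly \Cref{thm:independent set} specialized to $\#T = 1$ (the hypothesis $\#(F_j \cap T) \le 1$ is automatic in that case). Inspecting the definitions, one sees that
\[
equivH_{\mathscr{I}_{\Delta, \br_{[m]\setminus T}}}(s_1, t) = \sum_{n \in \N} H_{R_{(\bc, n)}/I_{\cM(\Delta, (\bc, n))}}(t)\, s_1^n
\]
coincides with the equivariant Hilbert series of this filtration as defined in \cite[Section 7]{NR}. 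Hence \cite[Theorem 7.8]{NR} (or alternatively \cite[Theorem 4.3]{KLS}) yields rationality in $s_1$ and $t$.

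The only point requiring attention is that $R_{(\bc, n)}$ involves $k$ parallel copies of the $n$-variable block rather than a single copy, so the ambient framework must accommodate $S_n$ acting on several ``colors'' of variables simultaneously. I expect this to be benign: the rationality proofs in \cite{NR}, \cite{NR2}, and \cite{KLS} are formulated with enough generality to allow arbitrarily many colors of variables permuted by a single copy of $S_\infty$. Once this compatibility of setups is verified, nothing beyond citing the existing theorem is required.
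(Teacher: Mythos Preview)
Your approach is essentially the paper's: reduce to a single-parameter $S_\infty$-invariant filtration and invoke \cite[Theorem~7.8]{NR} (or \cite[Theorem~4.3]{KLS}). The paper resolves the ``colors'' caveat you flag by simply fixing a bijection $\psi\colon [\bc]\to[c]$ with $c=c_1\cdots c_{m-1}$ and reindexing $R_{(\bc,n)}$ as $\K[x_{i,j}\mid (i,j)\in[c]\times[n]]$, so the ambient ring is literally the one in \cite{NR} with a single block of $c$ variables per level and no framework compatibility needs checking.
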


\begin{proof}
The assumption means $T=\{m\}$ and $\br = (\bc, n)$ with $\bc \in \N^{m-1}$ and $n \in \N$. Set $c = c_1  \cdots c_{m-1}$ and fix a bijection 
\[
\psi \colon [\bc] = [c_1] \times \cdots \times [c_{m-1}] \to [c]. 
\]
For every $n \in \N$, it induces a ring isomorphism 
\begin{align*}
R_{(\bc, n)} =\mathbb{K}[x_{\mathbf{i}, j} \mid (\mathbf{i}, j) \in [\bc]\times [n]]  \longrightarrow &\mathbb{K}[x_{i, j} \mid  (i, j) \in [c]\times [n]]=R'_n \\
x_{\mathbf{i}, j} & \mapsto  x_{\psi{(\bi)}, j}. 
\end{align*}
This isomorphism maps every  ideal $I_{\cM (\Delta, n)}$ corresponding to the model $\cM (\Delta, (\bc, n))$ onto an $S_n$-invariant ideal $I_n$. In particular, the rings 
$R_{(\bc, n)}/I_{\cM (\Delta, n)}$ and $R'_n/I_n$ have the same Hilbert series and the family 
$(I_n)_{n \in \N}$ is an $S_{\infty}$-invariant filtration. Thus, its equivariant Hilbert series is rational by \cite[Theorem 7.8]{NR} or \cite[Theorem 4.3]{KLS}. 
\end{proof} 

Our main result in this section describes further cases in which the answer to \Cref{q:independent so rational} is affirmative. 

\begin{thm} 
          \label{thm:main}
The equivariant Hilbert series of $\mathscr{I}_{\Delta,\mathbf{r}_{[m]\backslash T}}$  is a rational function with rational coefficients if
\begin{enumerate}
    \item  $F_i\cap F_j=\emptyset$ for any  distinct $F_i,F_j \in \Delta$.
    \item $|F\cap T|\leq 1$ for any $F\in \Delta$.
\end{enumerate}
\end{thm}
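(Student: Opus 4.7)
The plan is to derive an explicit formula for $equivH_{\CI}$ by exploiting the Segre-product structure forced by assumption (1), and then to recover rational coefficients via Galois descent. Write $\ell = \#T$ and $\br = (\bc, \n)$ with $\n = (n_1, \ldots, n_\ell) \in \N^\ell$ varying. Because the facets $F_j$ are pairwise disjoint, the map $\Phi_{\cM}$ identifies $R_{\br}/I_{\cM(\Delta, \br)}$ with the Segre product of the $q$ polynomial rings $\K[y_{j,\bi_{F_j}} : \bi_{F_j} \in [\br_{F_j}]]$, so
\[
\dim_{\K} [R_{\br}/I_{\cM(\Delta,\br)}]_d = \prod_{j=1}^q \binom{d + N_j - 1}{N_j - 1}, \qquad N_j := \prod_{i \in F_j} r_i.
\]
Together with (2), the fact that the facets partition $[m]$ forces each $t_k \in T$ to lie in a unique facet $F_{j(k)}$, while the remaining facets, indexed by a set $J_f$, are disjoint from $T$. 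Hence $N_j = \alpha_j$ is constant for $j \in J_f$, and $N_{j(k)} = \beta_k n_k$ with $\beta_k$ constant, for $k \in [\ell]$.

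Summing over $\n$ first and interchanging finite summations yields the factorization
\[
equivH_{\CI}(\s, t) = \sum_{d \ge 0} A(d)\, t^d \prod_{k=1}^\ell f_k(d, s_k),
\]
where $A(d) := \prod_{j \in J_f} \binom{d + \alpha_j - 1}{d}$ is a polynomial in $d$ with rational coefficients and $f_k(d, s) := \sum_{n \ge 1} \binom{d + \beta_k n - 1}{d}\, s^n$. Using the identity $\binom{d+N-1}{d} = [u^d](1-u)^{-N}$ followed by a geometric-series sum in $n$ gives the closed form
\[
f_k(d, s) = [u^d] \frac{s}{(1-u)^{\beta_k} - s}.
\]
Over an algebraic closure, $(1-u)^{\beta_k} - s$ splits as $\prod_{\zeta^{\beta_k}=1}\bigl((1 - \zeta s^{1/\beta_k}) - u\bigr)$, so partial fractions and extraction of $[u^d]$ give
\[
f_k(d, s) = \sum_{\zeta^{\beta_k}=1} B_k(\zeta, s)\,(1 - \zeta s^{1/\beta_k})^{-d}, \qquad B_k(\zeta,s) \in \Q(s^{1/\beta_k}).
\]

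Substituting back and swapping the sums once more reduces $equivH_{\CI}$ to a finite $\Q$-linear combination of series $\sum_{d \ge 0} A(d)\, z^d$ indexed by tuples $(\zeta_1, \ldots, \zeta_\ell)$, where $z = t / \prod_k (1 - \zeta_k s_k^{1/\beta_k})$. Since $A$ is a polynomial in $d$, each such series equals $\widetilde{A}(z)/(1-z)^{\deg A + 1}$ for an explicit polynomial $\widetilde{A} \in \Q[z]$. Consequently, $equivH_{\CI}$ is rational in $t$ and in the radicals $s_k^{1/\beta_k}$, with coefficients in the number field obtained by adjoining the relevant roots of unity and radicals to $\Q$. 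This already establishes rationality, but only over an algebraic extension of $\Q(\s, t)$.

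The main obstacle is upgrading from algebraic to rational coefficients, and I would resolve it via Galois descent. The extension $\Q(s_1^{1/\beta_1}, \ldots, s_\ell^{1/\beta_\ell}, t)/\Q(\s, t)$ is Galois with group $G = \prod_k \Z/\beta_k\Z$ acting by $s_k^{1/\beta_k} \mapsto \zeta_k s_k^{1/\beta_k}$. Since $equivH_{\CI}$ is an honest formal power series in $\Q[[\s, t]]$, with no fractional powers of the $s_k$ in its Taylor expansion, its rational-function representative must be $G$-invariant and therefore lies in the fixed field $\Q(\s, t)$. Concretely, the summation over tuples of roots of unity above is already a symmetrization over the Galois orbit, so collecting terms into elementary symmetric polynomials in $\{\zeta_k s_k^{1/\beta_k} : \zeta_k^{\beta_k}=1\}$ shows the radicals cancel, leaving a rational function over $\Q$.
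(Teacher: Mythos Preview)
Your approach is essentially correct and genuinely different from the paper's main argument. The paper does \emph{not} prove rationality over $\Q$ via the explicit Segre formula; instead it (i) reduces via Lemma~3.5 to the case of disjoint two-element facets each meeting $T$, (ii) establishes that case (Proposition~3.6) by encoding the monomials of the image algebra as words in a regular language, constructing a finite automaton that recognizes it, and invoking the rationality of weighted generating functions of regular languages, and (iii) handles facets disjoint from $T$ by induction on their number, extracting the desired series as an iterated partial derivative of a series covered by the base case. The explicit roots-of-unity formula you derive does appear in the paper (Proposition~5.1), but only in the already-reduced setting and only as an a posteriori computation: the paper \emph{cites} Theorem~3.4 to assert that the formula can be rewritten with rational coefficients, whereas your Galois-descent argument makes that step self-contained. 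Your treatment also absorbs the facets disjoint from $T$ directly into the polynomial factor $A(d)$, bypassing both the reduction lemma and the induction. In exchange, the automaton approach is more robust: it does not rely on the Segre decomposition and, as the paper notes, extends to algebras that are not Segre products; it also yields a direct algorithm producing the rational function over $\Q$ rather than an identity that must then be symmetrized.

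One technical slip to fix: the extension $\Q(s_1^{1/\beta_1},\ldots,s_\ell^{1/\beta_\ell},t)/\Q(\s,t)$ is \emph{not} Galois when some $\beta_k>2$, since $\Q$ lacks the needed roots of unity, and your coefficients $B_k(\zeta,s)$ lie in $L(s^{1/\beta_k})$ with $L$ cyclotomic rather than in $\Q(s^{1/\beta_k})$. The repair is immediate: work in the Galois closure $L(s_1^{1/\beta_1},\ldots,s_\ell^{1/\beta_\ell},t)$ over $\Q(\s,t)$, where $L=\Q(\mu_{\mathrm{lcm}(\beta_k)})$; this extension is Galois, its automorphisms (acting on both the radicals and the coefficients) fix the power series $equivH_{\CI}\in\Q[[\s,t]]$, and since the embedding of rational functions with unit denominator into formal power series is injective and equivariant, your rational expression lies in the fixed field $\Q(\s,t)$ as claimed.
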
 

This results applies in particular to the independence model, where it takes an attractive form. 

\begin{ex}  
     \label{ex:indep model}
A hierarchical model describing $m$ independent parameters is called \emph{independence model}. Its collection of facets is $\Delta=\{\{1\},\{2\},\dots,\{m\}\}$. Thus, we may apply 
\Cref{thm:main} with any subset $T$ of $[m]$. Using $T = [m]$, we show in \Cref{ex:equiv HS} 
 below that 
\begin{align*}
equivH_{\mathscr{I}_{\Delta,\mathbf{r}_{[m] \setminus T}}} (s_1,s_2,\ldots,s_{m}, t) 
& =\sum\limits_{\n \in \N^{m}} H_{R_{\n}/ I_{\cM (\Delta, \n)}} (t) \cdot s_1^{n_1} \cdots   s_{m}^{n_{m}}\\
& =
\dfrac{s_1\cdots s_m}{(1-s_1)\cdots(1-s_m)-t}. 
\end{align*}
\end{ex}

The proof of \Cref{thm:main} will be given in two steps. First we show that it is enough to prove the result in a special case where every facet consists of two elements. Second, we use regular languages to show the desired rationality in the following section. 

In the remainder of this section we establish the reduction step.

\begin{lemma} 
      \label{lem:reduction}
Consider a collection $\Delta = \{F_1,\ldots,F_q\}$ on vertex set $[m]$ and a subset $T$ of $[m]$ satisfying 
\begin{enumerate}
    \item  $F_i\cap F_j=\emptyset$ for any $F_i,F_j \in \Delta$.
    \item $|F\cap T| =1$ for any $F\in \Delta$. 
\end{enumerate}
Then there is a collection $\Delta' = \{F'_1,\ldots,F'_q\}$ on vertex set $[m']$ consisting of two 
element facets and also satisfying conditions (1) and (2) with the property that, for every 
$\bc \in \N^{m-\# T}$ there is some $\bc' \in \N^{m' - \#T}$ such that the filtrations corresponding to 
the models $\cM(\Delta, (\bc, \n))$ and $\cM(\Delta', (\bc', \n))$ with $\n \in \N^{\# T}$ have the same equivariant Hilbert series. 
\end{lemma}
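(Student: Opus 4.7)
The plan is to collapse each facet $F_j$ to a two-element facet by replacing the non-$T$ portion $F_j \setminus T$ with a single new vertex whose number of states equals $\prod_{i \in F_j \setminus T} c_i$. Since the toric ideal of a hierarchical model is determined by the index sets only through their cardinalities and a product decomposition over facets, this collapse should preserve the coordinate ring up to graded isomorphism for every $\n$, hence preserve Hilbert series term by term, and therefore the whole equivariant Hilbert series.

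First, I will record that by (1) together with the standing assumption $\bigcup_j F_j = [m]$ the facets partition $[m]$; combined with (2), each $F_j$ meets $T$ in a unique vertex $\tau_j$, and in particular $\# T = q$. I then set $m' = 2q$ and define $\Delta' = \{F'_1,\ldots,F'_q\}$ with $F'_j = \{\tau_j, v'_j\}$, where $v'_j$ is a fresh vertex playing the role of the collapsed set $F_j \setminus \{\tau_j\}$. Conditions (1) and (2) for $\Delta'$ are then immediate. For a given $\bc \in \N^{m-q}$ I set
\[
c'_{v'_j} := \prod_{i \in F_j \setminus \{\tau_j\}} c_i
\]
and choose any bijection $\psi_j \colon \prod_{i \in F_j \setminus \{\tau_j\}} [c_i] \to [c'_{v'_j}]$.

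The core step is a compatibility check. Writing $\br = (\bc, \n)$ and $\br' = (\bc', \n)$, the $\psi_j$ assemble, using the partition of $[m]$ given by (1), into a single bijection $\Psi \colon [\br] \to [\br']$ on full index sets and, for each $j$, a bijection $\Psi_j \colon [\br_{F_j}] \to [\br'_{F'_j}]$ on facet-restricted index sets satisfying $\Psi(\bi)_{F'_j} = \Psi_j(\bi_{F_j})$. The resulting graded $\K$-algebra isomorphism $R_{\br} \to R_{\br'}$, $x_\bi \mapsto x_{\Psi(\bi)}$, together with the analogous isomorphism $S_{\cM(\Delta, \br)} \to S_{\cM(\Delta', \br')}$ sending $y_{j, \bi_{F_j}} \mapsto y_{j, \Psi_j(\bi_{F_j})}$, intertwines the two monomial maps $\Phi$ of \eqref{eq:def homo}. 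Hence $I_{\cM(\Delta, \br)}$ is carried onto $I_{\cM(\Delta', \br')}$, the quotient rings are graded-isomorphic for every $\n$, and their Hilbert series coincide; summing over $\n$ then yields equality of the equivariant Hilbert series.

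I expect the only real obstacle to be notational bookkeeping, namely choosing the $\psi_j$ so that the $F_j$-restriction and the $F'_j$-restriction commute through $\Psi$. Hypothesis (1) makes this automatic: it lets one factor $[\br]$ as $\prod_j [\br_{F_j}]$ and $[\br']$ as $\prod_j [\br'_{F'_j}]$, so that $\Psi$ is literally the product $\prod_j \Psi_j$ under these decompositions, and the required identity $\Psi(\bi)_{F'_j} = \Psi_j(\bi_{F_j})$ becomes the definition.
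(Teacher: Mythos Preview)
Your proposal is correct and follows essentially the same approach as the paper: collapse each $F_j \setminus T$ to a single new vertex $v'_j$ with $c'_{v'_j} = \prod_{i \in F_j \setminus T} c_i$, choose bijections $\psi_j$, and verify that the induced isomorphisms on $R$ and $S$ make the defining monomial maps $\Phi$ commute, so the kernels correspond and the coordinate rings are graded-isomorphic for every $\n$. The only cosmetic difference is that the paper first reduces explicitly to the case $|F_j| \ge 2$ by adjoining a dummy vertex with one state when $|F_j| = 1$, whereas your construction absorbs that case automatically via the empty-product convention $c'_{v'_j} = 1$.
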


\begin{proof}
The assumptions imply that $T$ must have $q$ elements. We may assume that every facet in $\Delta$ has at least two elements. Indeed, if $F \in \Delta$ has only one element then we may replace $F$ by the union $F'$ of $F$ and a new vertex. Assigning to the parameter corresponding to the new vertex exactly one possible state gives a new model whose coordinate ring has the same Hilbert series as the original model. 

Given such a hierarchical  model $\cM_{\n} = \cM(\Delta, (\bc, \n))$ on vertex set $[m]$, we will construct a new hierarchical model 
$\cM'_{\n} = \cM(\Delta', (\bc', \n))$ on $m' = 2 q$ vertices that has the same Hilbert series. The new vertex set is the disjoint union of the $q$ vertices in 
$F_j \cap T$ with $j \in [q]$ and a set $V$ of $q$ other vertices, say $V = [q]$. For $j \in [q]$, set 
$F'_j = \{j\} \cup (F_j \cap T)$. Thus, the sets $F'_j$ are pairwise disjoint because $F_1,\ldots,F_q$ have this property, and 
each $F'_j$ has exactly two elements. In particular,  $\Delta' = \{F'_1,\dots,F'_q\}$ and $T$ satisfy conditions (1) and (2). 

Now let $c'_j=\prod\limits_{e\in F_j \setminus T} c_e = \# [\bc_{F_j \setminus T}]$ be the number of states of the parameter corresponding to vertex $j \in F'_j$. Furthermore, for every $j \in [q]$, let the parameter corresponding to vertex 
$F'_j \cap T$ have the same number of states as $F_j \cap T$ has in $\cM_{\n}$. This completes the definition of   a new  hierarchical model 
$\cM'_{\n} = \cM(\Delta', (\bc', \n))$. The passage form $\cM_{\n}$ to $\cM'_{\n}$ is illustrated in an example below. 

\includegraphics[scale=0.2]{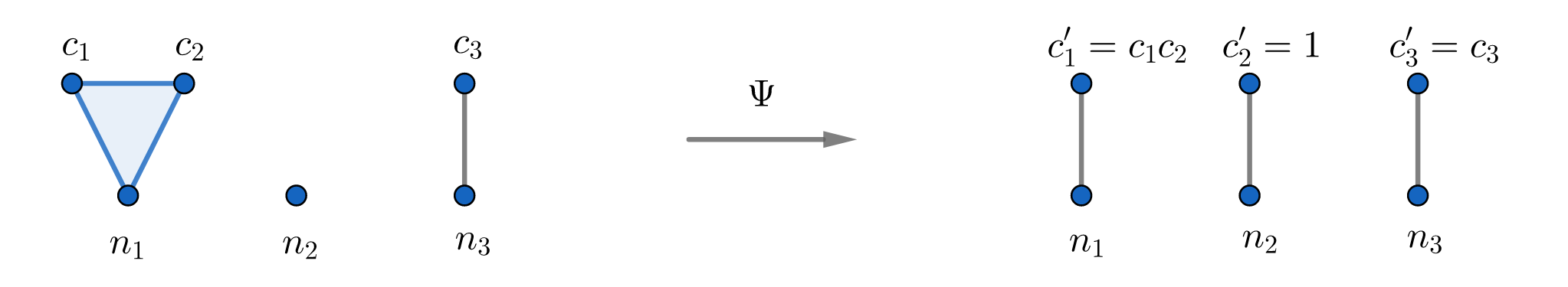}
\begin{eqnarray*}
{\small  \Delta=\{124,5,36\}, r=(c_1,c_2,c_3,n_1,n_2,n_3) } & \longrightarrow & {\small \Delta'=\{14,25,36\}, r'=(c'_1,1,c'_3,n_1,n_2,n_3)  }
\end{eqnarray*}


Varying $\n \in \N^q$, the ideals $I_{\cM'_{\n}}$ form an $S_{\infty}^q$-invariant filtration. Thus, to establish the assertion 
it is enough to prove that  for every $\mathbf{n}\in \mathbb{N}^q$,  the quotient  rings $R_{\mathbf{n}}\slash I_{\cM_{\n}}$ and  $R'_{\mathbf{n}}\slash I_{\cM'_{\n}}$ are isomorphic.

For every $F_j\in\Delta$, the sets $[\mathbf{c}_{F_j \setminus T}]$ and $[c'_j]$ have the same finite cardinality. Choose a bijection 
\[
\psi_{j}:[\mathbf{c}_{F_j \setminus T}] \longrightarrow [c'_j]. 
\]
These choices determine two further bijections:
\begin{equation}
      \label{eq:bijection 1}
(\psi_{1},\dots,\psi_{q},\id_{[\mathbf{n}}]) \colon [\mathbf{c}_{F_1 \setminus T}]\times\dots \times[\mathbf{c}_{F_q \setminus T}]\times [\mathbf{n}] \longrightarrow    [c'_1]\times\dots\times[c'_q]\times[\mathbf{n}]
\end{equation}
and 
\begin{equation}
      \label{eq:bijection 2}
(\psi_{j},\id_{[n_j]}) \colon [\mathbf{c}_{F_j \setminus T}]\times[n_j] \longrightarrow    [c'_j]\times[n_j].  
\end{equation}
Bijection \eqref{eq:bijection 1} induces the following isomorphism of polynomial rings
\begin{align*}
\hspace{19em}&\hspace{-19em}  
 \Psi \colon  R_{(\bc,\n)} =\mathbb{K}[x_{\mathbf{i}_{F_1 \setminus T},\dots,\mathbf{i}_{F_q \setminus T},\mathbf{k}} \mid \mathbf{i}_{F_q \setminus T} \in [\mathbf{c}_{F_q \setminus T}], \mathbf{k} \in [\mathbf{n}]] \\
 &  \longrightarrow     \mathbb{K}[x_{i_1,\dots,i_q,\mathbf{k}} \mid i_j\in[c'_j], \mathbf{k}\in [\mathbf{n}]]=R'_{\n} \\
 x_{\mathbf{i}_{F_1 \setminus T},\dots,\mathbf{i}_{F_q \setminus T},\mathbf{k}} & \mapsto x_{\psi_1(\mathbf{i}_{F_1 \setminus T}),\dots,\psi_q (\mathbf{i}_{F_q \setminus T}),\mathbf{k}}. 
 \end{align*}
 Similarly,   Bijection \eqref{eq:bijection 2} induces an isomorphism of polynomial rings
\begin{align*}
\hspace{15em}&\hspace{-15em}  
\Psi' \colon S_{\n}=\mathbb{K}[y_{j,\mathbf{i}_{F_j \setminus T},k_j} \mid 1\leq j\leq q, \mathbf{i}_{F_j \setminus T}\in [\mathbf{c}_{F_j \setminus T}] , k_j \in [n_j]] \\
&\longrightarrow     \mathbb{K}[y_{j,i_j,k_j} \:|\: 1\leq j\leq q,  i_j\in [c_j] , k_j \in [n_j]]=S'_{\n} \\
y_{j,\mathbf{i}_{F_j \setminus T},k_j} & \mapsto y_{j,\psi_j(\mathbf{i}_{F_j \setminus T}),k_j}. 
\end{align*}
We claim that the following diagram is commutative: 
\begin{equation}
     \label{eq:diagram 3}
\begin{tikzcd}R_{(\bc, \n)} \arrow{r}{\Phi_{\cM}} \arrow[swap]{d}{\Psi} & S_{\n}  \arrow{d}{\Psi' } \\
R'_{\n} \arrow{r}{\Phi_{\cM'}}&  S'_{\n}\end{tikzcd}
\end{equation}
Indeed, it suffices to check this for variables variables. In this case commutative is shown by the diagram:   
\[ 
\begin{tikzcd}
x_{\mathbf{i}_{F_1 \setminus T},\dots,\mathbf{i}_{F_q \setminus T},\mathbf{k}} \arrow[r, mapsto, "\Phi_{\cM}"]{} \arrow[d,swap, mapsto, "\Psi"]{} & \prod\limits_{j=1}^q y_{j,\mathbf{i}_{F_j \setminus T},k_j}  \arrow[d, mapsto, "\Psi'"]{} \\%
x_{\psi_1(\mathbf{i}_{F_1 \setminus T}),\dots,\psi_q(\mathbf{i}_{F_q \setminus T}),\mathbf{k}} \arrow[r, mapsto, "\Phi_{\cM'}"]{}& \prod\limits_{j=1}^q y_{j,\psi_1(\mathbf{i}_{F_q \setminus T}),k_j}
\end{tikzcd}
\]
Since $\Psi$ and $\Psi'$ are isomorphisms  commutativity of Diagram \eqref{eq:diagram 3}  implies that 
$\im (\Phi) \cong \im(\Phi')$, which concludes the proof. 
\end{proof}

We also need the following result. 

\begin{prop}
      \label{prop:second step}
Let $\CI = \{I_{\n}\}_{\n\in\N^q}$ be the $S_{\infty}^q$-invariant  filtration corresponding to hierarchical models  $\cM(\Delta, (\bc, \n))$ with 
$\Delta$ consisting of $q$ $2$-element disjoint facets $F_1,\ldots,F_q$, each meeting $T$ in exactly one vertex. 
Then the equivariant Hilbert series of $\CI$ is a rational function in $s_1,\ldots,s_q, t$ with rational coefficients. 
\end{prop}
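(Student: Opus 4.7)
The plan is to identify $R_{(\bc, \n)}/I_{\cM(\Delta, (\bc, \n))}$ explicitly as a Segre product of polynomial rings, express the equivariant Hilbert series as a Hadamard product in $t$ of rational functions, and invoke the classical closure of rational power series under Hadamard product.

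Since the facets $F_1, \ldots, F_q$ are pairwise disjoint two-element sets with $|F_j \cap T| = 1$, I would write $F_j \setminus T = \{j'\}$ and $F_j \cap T = \{j''\}$. The map $\Phi_{\cM}$ of \eqref{eq:def homo} then sends each variable $x_{\bi, \bk}$ to a monomial $\prod_{j=1}^q y_{j, i_{j'}, k_{j''}}$ containing exactly one variable from each of the $q$ disjoint groups $\{y_{j, i, k} : i \in [c_j], k \in [n_j]\}$ in $S_{\n}$. Generalizing \Cref{exa:two facets}(i) from two to $q$ disjoint facets identifies $R_{(\bc, \n)}/I_{\cM}$ with the Segre product of $q$ polynomial rings, the $j$-th having $c_j n_j$ variables. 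Since the degree-$d$ component of such a Segre product is the tensor product of the degree-$d$ parts of its factors,
\[
H_{R_{(\bc, \n)}/I_{\cM}}(t) \;=\; \sum_{d \ge 0} \prod_{j=1}^q \binom{c_j n_j + d - 1}{d}\, t^d.
\]

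Substituting into \Cref{def:equiv Hilb} and interchanging the two summations factors the equivariant Hilbert series as
\[
equivH_{\CI}(s_1, \ldots, s_q, t) \;=\; \sum_{d \ge 0} t^d \prod_{j=1}^q f_j(s_j, d), \qquad f_j(s_j, d) \;:=\; \sum_{n \ge 1} \binom{c_j n + d - 1}{d}\, s_j^n.
\]
Using the identity $\sum_{d \ge 0} \binom{c_j n + d - 1}{d}\, t^d = (1-t)^{-c_j n}$ and summing a geometric series in $n$ gives
\[
g_j(s_j, t) \;:=\; \sum_{d \ge 0} f_j(s_j, d)\, t^d \;=\; \frac{s_j}{(1 - t)^{c_j} - s_j} \;\in\; \Q(s_j, t),
\]
so $equivH_{\CI}$ is precisely the iterated Hadamard product in $t$ of $g_1, \ldots, g_q$ taken over the base field $\K_0 := \Q(s_1, \ldots, s_q)$.

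The argument then concludes with the classical theorem that the Hadamard product of two rational power series over any field is rational; iterating it $q - 1$ times places $equivH_{\CI}$ in $\K_0(t) = \Q(s_1, \ldots, s_q, t)$, which is exactly rationality with rational coefficients. The main obstacle I anticipate is confirming that the Hadamard-product closure genuinely delivers joint $\Q$-rationality in all variables rather than merely rationality in $t$ over some unspecified extension; this is resolved by the field identification $\Q(s_1, \ldots, s_q)(t) = \Q(s_1, \ldots, s_q, t)$ together with the fact that each $g_j$ is already defined over $\Q(s_j) \subset \K_0$. If a closed form is desired, partial-fraction decomposition of each $g_j$ over an algebraic closure of $\Q(s_j)$ writes $f_j(s_j, d) = \sum_k A_{j,k}(s_j)\, \beta_{j,k}(s_j)^d$ with algebraic $\beta_{j,k}$ and yields
\[
equivH_{\CI} \;=\; \sum_{(k_1, \ldots, k_q)} \frac{\prod_{j=1}^q A_{j, k_j}(s_j)}{1 \,-\, t\prod_{j=1}^q \beta_{j, k_j}(s_j)},
\]
whose invariance under the Galois action on the $\beta_{j, k_j}$ forces the sum into $\Q(s_1, \ldots, s_q, t)$.
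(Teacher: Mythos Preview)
Your argument is correct and takes a genuinely different route from the paper. The paper encodes the monomials of $A_{\n}$ by words in a formal language $\mathcal{L}$ over an alphabet $\{\zeta_{\bi},\tau_j\}$, builds an explicit deterministic finite automaton recognizing $\mathcal{L}$, and then invokes the rationality of weighted generating functions of regular languages (\Cref{thm:rat hilb of lang}) to conclude. You instead use the Segre identification to write the equivariant series as an iterated Hadamard product in $t$ of the rational functions $g_j(s_j,t)=s_j/\bigl((1-t)^{c_j}-s_j\bigr)$ over $\K_0=\Q(s_1,\ldots,s_q)$, and appeal to the classical closure of $\K_0(t)$ under Hadamard product (this is e.g.\ \cite[Proposition~4.2.5]{St}).

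Your approach is shorter and self-contained; the paper's approach buys an explicit algorithm (the matrix formula $\mathbf{u}^T(\id-\sum_a\rho(a)M_{\cA,a})^{-1}\mathbf{e}_1$ in Section~5) and is designed to extend to filtrations not arising from Segre products, as advertised in the remark following the proof and in \cite{MN}. Your closing partial-fraction computation, expanding each $g_j$ over the roots $1-\omega_j^{m}s_j^{1/c_j}$, recovers exactly the explicit formula of \Cref{prop:explict computation}; in fact the paper arrives at that formula by essentially the same calculation (see \eqref{eq:use of Segre} and \Cref{lem:elementary}), but presents it separately from the rationality proof because the automaton argument, not the roots-of-unity formula, is what directly yields rational coefficients.
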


This will be shown in the following section. Assuming the result, we  complete the argument for establishing  \Cref{thm:main}. 

\begin{proof}[Proof of  \Cref{thm:main}] 
Let $\nu$ be the number of facets in $\Delta$ whose intersection with $T$ is empty. We use induction on $\nu \ge 0$. If $\nu =0$ the claimed rationality follows by combining \Cref{lem:reduction} and \Cref{prop:second step}. 

Let $\nu \ge 1$. We may assume that $F_1 \cap T = \emptyset$ and that vertex 1 is in $F_1$. By assumption, it has $c_1$ states. Set $\tilde{\n} = (n_1, \n)$, $\tilde{\bc} = (c_2,\ldots, c_{\# T})$ and 
$\tilde{T} = T \cup \{1\}$. Then the hierarchical models $\tilde{\cM} (\Delta, (\tilde{\bc}, \tilde{\n})$ give rise to a filtration $\tilde{\CI} = \CI_{\Delta, \br_{[\m] \setminus \tilde{T}}}$. By induction on $\nu$, it has a rational equivariant Hilbert series. Since $equivH_{\CI}$ is obtained by evaluating 
$\dfrac{1}{c_1!} \dfrac{\partial^{c_1} equivH_{\tilde{\CI}}}{\partial s_1^{c_1}}$ at $s_1 = 0$, it follows that also $equivH_{\CI}$ is rational. 
\end{proof}


\section{Regular Languages}

The goal of this section is to establish \Cref{prop:second step}. We adopt its notation. 

Fix $\bc \in \N^q$. As above, we write $x_{\bi,\mathbf{k}}$ for $x_{i_1,\dots,i_q, k_1,\dots,k_q}$, where $(\bi, \bk) = (i_1,\dots,i_q, k_1,\dots,k_q) \in [\bc] \times [\n] \subset \N^{2 q}$. Thus, $y_{j, \bi_{F_j}, \bk_{F_j}}$ is simply $y_{j, i_j, k_j}$. 
For any $\n \in \N^q$,  the  homomorphism associated to the model $\cM_{\n} = \cM(\Delta, (\bc, \n))$ is 
\begin{align*}
\Phi_{\n} \colon R_{\n} = \K[x_{\bi, \bk}  \mid (\bi, \bk) \in [\bc] \times [\n] ] 
& \to \K[y_{j, i_j, k_j} \mid j \in [q], i_j \in c_j, k_j \in [n_j] ] = S_{\n} \\
x_{\bi, \bk} & \longmapsto \prod_{j=1}^q  y_{j, i_j, k_j}.
\end{align*}
Set 
\[
A_\n = \im \Phi_{\n} = \K \left [ \prod_{j=1}^q  y_{j, i_j, k_j}  \mid  i_j \in c_j, k_j \in [n_j] \right ].
\]
We denote the set of monomials in $A_\n$ and $S_\n$  by $\Mon (A_\n)$ and $\Mon (S_\n)$, respectively.  Define $\Mon (A)$ as the disjoint union of the sets $\Mon (A_\n)$ with $\n \in \N^q$ and similarly $\Mon (S)$.  Our next goal is to show that the elements of $\Mon (A)$ are in bijection to the words of a suitable formal language. 

Consider a set  
\[
\Sigma=\{ \zeta_{\mathbf{i}}, \tau_j \mid \mathbf{i}\in [\mathbf{c}],  j \in [q]\} 
\]
with  $q + \prod\limits_{j=1}^q c_j$ elements. Let $\Sigma^*$ be the free monoid on $\Sigma$. A 
\emph{formal language} with words in the alphabet $\Sigma$ is a subset of $\Sigma^*$. We refer to the 
elements of $\Sigma$ as letters. The empty word is denoted by $\eps$. 

In order to compare subsets of $\Sigma^*$ with $\Mon (A)$ we 
need suitable maps. For $j \in [q]$, define a shift operator
$
T_j \colon \Mon (S) \to \Mon (S) 
$ 
by 
\[
T_j(y_{l, i, k}) = \begin{cases}
y_{l, i, k+1} & \text{ if } l = j; \\
y_{l, i, k} & \text{ if } l \neq j,  
\end{cases}
\]
extended  multiplicatively to $\Mon (S)$.  Define a map $\textbf{m} \colon \Sigma^{\star} \rightarrow \Mon (S)$ inductively using the three rules 
\[
\text{(a) } \textbf{m} (\epsilon)=1,  \quad
\text{(b) } \textbf{m} (\zeta_{\bi} w )=\prod\limits_{j=1}^q y_{j,i_j,1}\textbf{m}(w),  \quad
\text{(c) } \textbf{m} (\tau_j w)=T_j(\textbf{m}(w)), 
\]
where $w\in \Sigma^*$. In particular, this gives  $\textbf{m} (\zeta_{\bi}) = \Phi_\n (x_{\bi, \one})$ for any 
$\n \in \N^q$, where $\one$ is the $q$-tuple whose entries are all equal to 1. 

\begin{ex}
If $c_1 =c_2 = q = 2$, one  has $\Sigma=\{ \zeta_{1,1},\zeta_{1,2},\zeta_{2,1}, \zeta_{2,2}, \tau_1,\tau_2\}$, and, for any $\n \ge (2, 3)$,  
\begin{align*}
\mathbf{m}(\tau_1\tau_2\zeta_{1,2}\tau_{2}\zeta_{1,1}\tau_1) 
& = T_1(T_2( y_{1,1,1} y_{2,2,1}T_2 (y_{1,1,1} y_{2,1,1}T_1(1)))) \\
& = T_1(T_2( y_{1,1,1} y_{2,2,1}   y_{1,1,1} y_{2,1,2})) \\
& = y_{1,1,2} y_{2,2,2}   y_{1,1,2} y_{2,1,3} \\
& = \Phi_\n (x_{(1,2),(2,2)}) \Phi_\n (x_{(1,1),(2,3)}).
\end{align*}
\end{ex}

The map $\m$ is certainly not injective because the variables $y_{j,i,k}$ commute. For example, if $q = 2$ one has $\mathbf{m}(\tau_1\tau_2)=\mathbf{m}(\tau_2\tau_1)$ and 
$\mathbf{m}(\zeta_{2,1}\zeta_{1,2})=\mathbf{m}(\zeta_{1,2}\zeta_{2,1})=\mathbf{m}(\zeta_{1,1}\zeta_{2,2})$ 
and $\mathbf{m}(\tau_1\zeta_{1,2}\tau_2\zeta_{2,1})=\mathbf{m}(\tau_1 \zeta_{2,2} \tau_2 \zeta_{1,1})$. 
Thus, we introduce a suitable subset of $\Sigma^*$. 

\begin{defn}
      \label{def:language L}
Let  $\mathcal{L}$ be the set of words in $\Sigma^*$ that satisfy the following conditions:
\begin{enumerate}
\item Every substring $\tau_i \tau_j$ has $i \le j$. 

\item In every substring with no $\tau_j$, if $\zeta_{\bi}$ occurs to the left of some $\zeta_{\bi'}$, then the $j$-th entry of $\bi$ is less than or equal to the $j$-th entry of $\bi'$.
\end{enumerate}
\end{defn}

To avoid triple subscripts below, we denote the $j$-th entry of a $q$-tuple $\bk_l$ by $k_{(l, j)}$, that is, we write
\[
\mathbf{k}_l=(k_{(l,1)},k_{(l,2)},\dots,k_{(l,q)}) \in \N^q. 
\]
Using multi-indices,  we write $\tau^a$ for $\tau_1^{a_1}\tau_2^{a_2}\dots \tau_q^{a_q}$ with $a=(a_1,a_2,\dots,a_q)$. A string consisting only of $\tau$-letters can be written as $\tau^\bk$ if and only if it satisfies Condition (1) in  \Cref{def:language L}. With this notation, one gets immediately the following explicit description of the words in $\cL$.

\begin{lemma}
     \label{lem:explicit words}
The elements of the formal language $\mathcal{L}$ are precisely the words of the form 
\[
\tau^{\mathbf{k}_1}\zeta_{\mathbf{i}_1}\tau^{\mathbf{k}_2}\zeta_{\mathbf{i}_2}\dots \tau^{\mathbf{k}_d}\zeta_{\mathbf{i}_d}\tau^{\mathbf{k}_{d+1}}, 
\] 
where  $\mathbf{i}_1,\dots,\bi_d\in [\mathbf{c}]$, $\mathbf{k}_1,\dots,\bk_{d+1}\in \mathbb{N}_0^q$, and $i_{(l-1, j)} \le i_{(l, j)}$ whenever $k_{(l, j)} = 0$ for some $(l, j)$ with   $2\leq l \leq d$ and $j \in [q]$. 
\end{lemma}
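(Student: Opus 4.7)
The plan is a direct unwinding of the definition of $\mathcal{L}$. First, I would observe that Condition (1) forces every maximal substring of a word $w \in \mathcal{L}$ consisting only of $\tau$-letters to have its indices weakly increasing, and hence to equal $\tau^{\bk} = \tau_1^{k_1}\tau_2^{k_2}\cdots \tau_q^{k_q}$ for a unique $\bk \in \mathbb{N}_0^q$. Since $w$ is determined by its successive letters, grouping maximal $\tau$-runs and separating them by the $\zeta$-letters yields a unique decomposition
\[
w = \tau^{\bk_1}\zeta_{\bi_1}\tau^{\bk_2}\zeta_{\bi_2}\cdots \tau^{\bk_d}\zeta_{\bi_d}\tau^{\bk_{d+1}},
\]
where $\bk_1$ and $\bk_{d+1}$ are allowed to be $\mathbf{0}$, so that $w$ may begin or end with a $\zeta$-letter.

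Next, I would translate Condition (2). Fix $j \in [q]$ and $2 \le l \le d$. The substring lying strictly between $\zeta_{\bi_{l-1}}$ and $\zeta_{\bi_l}$ is exactly $\tau^{\bk_l}$, and this contains no $\tau_j$ if and only if $k_{(l,j)} = 0$. Applying Condition (2) to such a substring yields $i_{(l-1,j)} \le i_{(l,j)}$ whenever $k_{(l,j)} = 0$; this establishes the forward direction.

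For the converse, suppose $w$ has the form displayed above with $i_{(l-1,j)} \le i_{(l,j)}$ whenever $k_{(l,j)} = 0$. Condition (1) is built into the factors $\tau^{\bk_l}$. To verify Condition (2), take any substring of $w$ containing no $\tau_j$, and any two $\zeta$-letters $\zeta_{\bi_a}, \zeta_{\bi_b}$ inside it with $a < b$. Then $k_{(l,j)} = 0$ for every $a < l \le b$, so chaining the consecutive inequalities gives $i_{(a,j)} \le i_{(a+1,j)} \le \cdots \le i_{(b,j)}$, as required.

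The only mild subtlety is the reduction of Condition (2) from arbitrary pairs of $\zeta$-letters to consecutive ones, and this is transparent by transitivity. Beyond this, the statement is a bookkeeping rewriting of the definition, and I expect no substantial obstacle.
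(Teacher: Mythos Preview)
Your proposal is correct and matches the paper's approach: the paper states the lemma as following immediately from the notation and \Cref{def:language L}, and your argument is precisely the direct unwinding of Conditions (1) and (2) that makes this explicit. There is nothing to add.
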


The following elementary observation is useful. 

\begin{lemma}
     \label{lem:normal form monomials}
Every monomial in $\Mon (A)$ can be uniquely written as a string of variables such that one has, the variable in any position $l$ is of the form $y_{j,i_j,k_j}$ with $j = l \!\!\!\mod q$ and, 
for each $j \in [q]$, if a variable  $y_{j,i_j,k_j}$ appears to the left of $y_{j,i'_j,k'_j}$, then 
either $k_j<k'_j$ or $k_j=k'_j$ and $i_j\leq i'_j$. 
\end{lemma}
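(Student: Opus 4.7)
The proof plan rests on one simple structural observation: each generator $\Phi_{\n}(x_{\bi, \bk}) = \prod_{j=1}^q y_{j, i_j, k_j}$ of $A_{\n}$ contributes \emph{exactly one} variable from the family $\mathcal{F}_j := \{y_{j, i, k} \mid i \in [c_j],\ k \in [n_j]\}$, for each $j \in [q]$. Hence if a monomial $\mu \in \Mon(A_{\n})$ is written as a product of $d$ generators (counted with multiplicity), then in $\mu$ each family $\mathcal{F}_j$ appears with the same total degree $d$. In particular $\deg_S(\mu) = qd$ is a multiple of $q$, and the multiset of variables occurring in $\mu$ splits into $q$ equal-sized sub-multisets $M_1, \ldots, M_q$, one per family. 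The integer $d = \deg_S(\mu)/q$ depends only on $\mu$, not on the chosen factorization into generators.

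For the existence of the canonical string, I would sort each $M_j$ in nondecreasing lexicographic order by the pair $(k_j, i_j)$, obtaining a sequence $v_j^{(1)}, v_j^{(2)}, \ldots, v_j^{(d)}$ of variables in $\mathcal{F}_j$. I then assemble a string $w$ of length $qd$ by placing $v_j^{(l)}$ at position $(l-1)q + j$, for $l \in [d]$ and $j \in [q]$. Read as a commutative product, $w$ equals $\mu$; the positional condition in the lemma is immediate from $(l-1)q + j \equiv j \pmod q$; and the sorting condition within each family holds by construction.

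For uniqueness, suppose $w'$ is any string representing $\mu$ and satisfying the two conditions. The positional condition forces the $d$ variables of $M_j$ to occupy, from left to right, precisely the positions $j, q+j, \ldots, (d-1)q+j$, and the second condition then forces them to appear in the unique lex-nondecreasing order on $(k_j, i_j)$; equal variables may be interchanged without altering the string. Hence $w'$ coincides with the string $w$ built above. I do not anticipate any genuine obstacle: the real content of the lemma is the equal-degrees-per-family observation in the first paragraph, which is immediate from the form of the generators of $A_{\n}$, and the rest is bookkeeping.
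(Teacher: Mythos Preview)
The proposal is correct and takes essentially the same approach as the paper: sort the variables within each family $\mathcal{F}_j$ by the lexicographic order on $(k,i)$ and interleave. Your write-up is more explicit than the paper's terse swap-and-sort argument in justifying why the interleaving is even possible (the equal-degrees-per-family observation), a point the paper leaves implicit.
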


\begin{proof}
If for some $j$, two variables $y_{j,i_j,k_j}$ and $y_{j,i'_j,k'_j}$ appearing in a monomial do not 
satisfy the stated condition, then swap their positions. Repeating this step as long as needed results 
in a string meeting the requirement. It is unique, because the given condition induces an order on 
the variables $y_{j,i,k}$ with fixed $j$. In the desired string, for each fixed $j$, the variables 
$y_{j, i, k}$ occur in this order when one reads the string from left to right. 
\end{proof}

We illustrate the above argument. 

\begin{ex}
Let $q=2$. To simplify notation write $y_{j k}$ instead of $y_{1, j, k}$ and $z_{j k}$ instead of 
$y_{2, j , k}$. Then one gets, for example, 

\begin{tikzpicture} [align=center]
\path  (0, 0)   node[]   (B)  {$    y_{22}z_{21}y_{14}z_{11}y_{31}z_{21}$};
\path   (4, 0.5)   node[]  (C) {$y_{22}y_{14}y_{31}$ };
\path    (4,-0.5)   node[]  (D) {$z_{21}z_{11}z_{21}$};
\path     (7, 0.5)   node[]  (E) {$\color{blue}{y_{31}y_{22}y_{14}}$};
\path     (7,-0.5)   node[]  (F) {$\color{red}{z_{11}z_{21}z_{21}}$};
\path     (11, 0)   node[]   (G)  { $\color{blue}{y_{31}}\color{red}{z_{11}} \color{blue}{y_{22}}\color{red}{z_{21}} {\color{blue}{y_{14}}\color{red}{z_{21}}}$ };
\path   (B) edge node{}(C);
\path   (B) edge node{}(D);
\path   (C) edge node{}(E);
\path   (D) edge node{}(F);
\path   (F) edge node{}(G);
\path   (E) edge node{}(G);
\end{tikzpicture}
\end{ex}

We observed above that the map $\m$ sends each letter $\zeta_{\bi}$ to the monomial $\Phi_\n (x_{\bi, \one})$. It follows that $\m (\Sigma^*)$ is a subset of $\Mon (A)$. In fact, one has the following result. 

\begin{prop}
       \label{prop:bijection}
For any $\n \in \N_0^q$, denote by $\cL_\n$ the set of words in $\cL$ in which, for each $j \in [q]$, the letter $\tau_j$ occurs precisely $n_j$ times. Then $\m$ induces  for every $\n \in \N_0^q$ a bijection 
\[
\m_\n \colon \cL_\n \to \Mon (A_{\n + \one}), \; w \mapsto \m (w).  
\]
\end{prop}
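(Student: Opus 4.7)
The plan is to match the explicit description of words in $\cL_{\n}$ given by \Cref{lem:explicit words} with the canonical form of monomials in $\Mon(A)$ from \Cref{lem:normal form monomials}, showing that the two descriptions are two encodings of the same data.

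First I would compute $\m(W)$ directly for an arbitrary word
\[
W = \tau^{\mathbf{k}_1}\zeta_{\bi_1}\tau^{\mathbf{k}_2}\zeta_{\bi_2}\cdots \tau^{\mathbf{k}_d}\zeta_{\bi_d}\tau^{\mathbf{k}_{d+1}} \in \cL_\n.
\]
Iterating the defining rules (a)--(c), using that the operators $T_1,\dots,T_q$ commute pairwise and act multiplicatively, a short induction on $d$ yields
\[
\m(W) \;=\; \prod_{l=1}^d \prod_{j=1}^q y_{j,\, i_{(l,j)},\, s_{l,j}}, \qquad \text{where } s_{l,j} := 1 + \sum_{m=1}^l k_{(m,j)}.
\]
Since $W \in \cL_\n$ forces $\sum_{m=1}^{d+1} k_{(m,j)} = n_j$ for every $j$, one has $1 \le s_{l,j} \le n_j + 1$, so $\m(W) \in \Mon(A_{\n+\one})$ and $\m_\n$ is well-defined.

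Next I would observe that the right-hand side above is already in the canonical form of \Cref{lem:normal form monomials}. Indeed, for each fixed $j$ the sequence $(s_{l,j})_{l=1}^d$ is weakly increasing in $l$, and condition (2) of \Cref{def:language L} translates exactly to the requirement that $i_{(l-1,j)} \le i_{(l,j)}$ whenever $s_{l-1,j} = s_{l,j}$, i.e., whenever $k_{(l,j)} = 0$. By the uniqueness of the canonical form, $\m(W)$ therefore determines the pairs $(i_{(l,j)}, s_{l,j})$ uniquely; from these the tuples $\bi_l$ and $\bk_m$ are recovered via $k_{(1,j)} = s_{1,j} - 1$, $k_{(m,j)} = s_{m,j} - s_{m-1,j}$ for $2 \le m \le d$, and $k_{(d+1,j)} = n_j + 1 - s_{d,j}$. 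This gives injectivity of $\m_\n$.

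For surjectivity, let $\mu \in \Mon(A_{\n+\one})$. Because $\mu$ is a product of basic monomials $\prod_j y_{j,\, *,\, *}$, the total $j$-degree $\deg_j(\mu)$ is the same integer $d$ for every $j \in [q]$. I apply \Cref{lem:normal form monomials} to extract, for each $j$, the canonical ordered sequence $(i_{(l,j)}, k_{l,j})_{l=1}^d$; setting $\bi_l := (i_{(l,1)},\dots,i_{(l,q)}) \in [\bc]$ and defining $\bk_1,\dots,\bk_{d+1}$ by the difference formulas above gives non-negative integers with $\sum_{m=1}^{d+1} k_{(m,j)} = n_j$. Condition (1) of \Cref{def:language L} is automatic from writing each $\tau$-block as $\tau^{\bk_m}$, and condition (2) follows directly from the canonical ordering, so the resulting word lies in $\cL_\n$ and maps to $\mu$ by the computation above. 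The main obstacle is purely bookkeeping: carefully matching the inequality condition (2) of \Cref{def:language L} with the secondary-ordering condition in \Cref{lem:normal form monomials}, and correctly tracking the partial-sum shifts that convert word-exponents into monomial second-indices. Once the two descriptions are aligned, the bijection is essentially forced.
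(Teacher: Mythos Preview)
Your proposal is correct and follows essentially the same approach as the paper: both arguments hinge on \Cref{lem:explicit words} and \Cref{lem:normal form monomials}, converting between word data $(\bi_l,\bk_m)$ and monomial data via the partial sums $s_{l,j}=1+\sum_{m\le l}k_{(m,j)}$ and their differences. Your injectivity step is slightly slicker—invoking the uniqueness of the canonical string directly rather than comparing indices by hand as the paper does—but the content is the same.
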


\begin{proof}
The definition of $\m$ readily implies $\m (w) \in \Mon (A_{\n + \one})$ if $w \in \cL_\n$. 

First we show that $\m_\n$ is surjective. Let $m \in \Mon (A_{\n + \one})$ be any monomial. Its 
degree is $d q$ for some $d \in \N_0$. By \Cref{lem:normal form monomials}, $m$ can be written as 
\[
m=\prod\limits_{l=1}^d \left (\prod\limits_{j=1}^qy_{j,i_{(l,j)},k_{(l,j)}} \right) = \prod\limits_{l=1}^d \Phi_\n (x_{\bi_l, \bk_l})
\]
such that, for each $j \in [q]$, one has 
\[
1 \le k_{(1,j)}\leq \dots \leq k_{(d,j)}\leq n_j+1
\]
and 
\[
i_{(l-1, j)} \le i_{(l, j)} \quad \text{ if  $k_{(l, j)} = 0$ for some $l$}. 
\]
The first condition implies that all the $q$-tuples $\bk_1 - \one, \bk_2 - \bk_1,\ldots, \bk_d - \bk_{d-1}$ and $\n+ \one - \bk_d$ are in $\N_0^q$. Hence the string 
\[
w=\tau^{\mathbf{k}_1-\one}\zeta_{\mathbf{i}_1}\tau^{\mathbf{k}_2-\mathbf{k}_1}\zeta_{\mathbf{i}_2} \dots \tau^{\mathbf{k}_d-\mathbf{k}_{d-1}}\zeta_{\mathbf{i}_d} \tau^{\n+ \one - \bk_d}
\]
is defined. The two conditions together combined with  \Cref{lem:explicit words} show that in fact $m$ is in $\cL_\n$. Hence $\m (w) = m$ proves the claimed surjectivity. 

Second, we establish that $\m_\n$ is injective. Consider any two words $w, w' \in \cL_\n$ with $\m(w)=\m(w')$. We will show $w = w'$. 

Write $w$ and $w'$ as in \Cref{lem:explicit words}: 
\[
w=\tau^{\mathbf{k}_1}\zeta_{\mathbf{i}_1}\tau^{\mathbf{k}_2}\zeta_{\mathbf{i}_2}\dots \tau^{\mathbf{k}_d}\zeta_{\mathbf{i}_d}\tau^{\mathbf{k}_{d+1}} , \quad 
w'=\tau^{\mathbf{k'}_1}\zeta_{\mathbf{i'}_1}\tau^{\mathbf{k'}_2}\zeta_{\mathbf{i'}_2}\dots \tau^{\mathbf{k'}_{d'}}\zeta_{\mathbf{i'}_{d'}}\tau^{\mathbf{k'}_{d'+1}}
\]
Since $\m (w)$ has degree $d q$ and $\m (w')$ has degree $d' q$, we conclude $d = d'$. Evaluating $\m$ we obtain 
\begin{equation}
     \label{eq:compare mon}
\prod\limits_{l=1}^d(\prod\limits_{j=1}^q y_{j,i_{(l,j)},f_{(l,j)}}) = 
\prod\limits_{e=1}^{d}(\prod\limits_{j=1}^qy_{j,i'_{(l,j)},f'_{(l,j)}}), 
\end{equation}
where $f_{(l,j)}=k_{(1,j)}+\dots +k_{(l,j)}+1$ and $f'_{(l,j)}=k'_{(1,j)}+\dots +k'_{(l,j)}+1$. 
Fix any $j \in [q]$. Comparing the  third indices of the variables whose first index equals $j$ and using that every index is non-negative, we get for each $l \in [d]$, 
\[
k_{(1,j)}+\dots +k_{(l,j)} = k'_{(1,j)}+\dots +k'_{(l,j)}. 
\]
It follows that $\bk_l = \bk'_l$ for each $l \in [d]$. Since $w$ and $w'$ are in $\cL_{\n}$, we have  
$\bk_{d+1} = \n-(\bk_1+\bk_2+\dots +\bk_d)$ and an analogous equation for $\bk'_{d+1}$, which 
gives $\bk_{d+1} = \bk'_{d+1}$. 

It remains to show $\bi_l=\bi'_l$ for every $l \in [d]$. Fix any $j \in [q]$. If for some $l$ 
there is only one variable of the form $y_{j, \mu, f_{(l, j)}}$ with $\mu \in [c_j]$  that divides $\m (w)$, this implies $i_{(l, j)} = i'_{(l, j)} = \mu$, as desired. 
Otherwise, there is a maximal interval of consecutive indices $k_{(l, j)}$ that are equal to zero, that is, there any integers $a, b$ such that  $1 \le a \le b \le d$ and 
\begin{itemize}
\item $k_{(l, j)} = 0$ if $a \le l \le b$, 

\item $k_{(a-1, j)} > 0$, unless $a = 1$, and 

\item $k_{(b+1, j)} > 0$, unless $b = d$. 
\end{itemize} 
Thus, the number of variables of the form $y_{j, \mu, f_{(l, j)}}$ that divide $\m (w)$ is $b - a + 2$ if $a \ge 2$ and $b-a+1$ if $a = 1$. Considering these variables, \Cref{lem:explicit words} gives  
\[
i_{(a-1, j)} \le i_{(a, j)} \le \cdots \le i_{(b, j)} \; \text{ and } \; i'_{(a-1, j)} \le i'_{(a, j)} \le \cdots \le i'_{(b, j)}, 
\]
where $i_{(a-1, j)}$ and $i'_{(a-1, j)}$ are omitted if $a = 1$.  Using \eqref{eq:compare mon}, it now  follows that $i_{(l, j)} = i'_{(l, j)}$ whenever $a-1 \le l \le b$, unless $a =1$. If $a = 1$, the latter equality is true whenever $a \le l \le b$. 

Applying the latter argument to any interval of consecutive zero indices $k_{(l, j)}$, we conclude 
$i_{(l, j)} = i'_{(l, j)}$ for every $l \in [d]$. This completes the argument. 
\end{proof}

Our next goal is to show that $\cL$ is a regular language. By \cite[Theorems 3.4 and 3.7]{HU}, this is equivalent to proving that $\cL$ is recognizable by a finite automaton. Recall that a \emph{deterministic finite 
automaton} on an alphabet $\Sigma$  is a $5$-tuple $\cA = (P, \Sigma, \delta, p_0, F)$ consisting  
of a  finite set $P$ of \emph{states},  an \emph{initial state} $p_0 \in P$, a set $F \subset P$ of 
\emph{accepting states}  and a \emph{transition map} $\delta \colon D \to P$, where $D$  is  some 
subset of $P \times \Sigma$.  We refer to  $\cA$ simply as  a finite automaton because we will consider 
only deterministic automata. The automaton $\cA$ \emph{recognizes} or \emph{accepts} a word 
$w=a_1a_2\dots a_s \in \Sigma^*$ if there is a sequence of states $r_0,r_1,\dots, r_s$ satisfying 
$r_0 = p_0$, $r_s \in F$ and 
\[
r_{j+1} = \delta (r_j, a_{j+1}) \quad \text{whenever } 0 \le j < s. 
\]
In words, the automaton starts in state $p_0$ and  transitions from state $r_j$ to a state $r_{j+1}$ based on the input $a_{j+1}$. The word $w$ is accepted if $r_s$ is an accepting state. If $\delta (p, a)$ is not defined the machine halts. 
The automaton $\cA$ \emph{recognizes a formal language} $\cL \subset \Sigma^*$ if $\cL$ is precisely the set of words in $\Sigma^*$ that are accepted by $\cA$. 

Returning to the formal language $\cL$ specified in \Cref{def:language L}, we are ready to show:  

\begin{prop}
      \label{prop:L recognizable} 
The language $\mathcal{L}$ is recognized by a finite automaton.
\end{prop}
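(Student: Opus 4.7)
The plan is to construct a deterministic finite automaton directly, exploiting the fact that both defining conditions of $\cL$ are \emph{local}: condition (1) only constrains pairs of adjacent $\tau$-letters, while condition (2), by transitivity of the coordinatewise order, reduces to constraints on pairs of adjacent $\zeta$-letters lying in the same maximal $\tau$-free substring. Consequently, it suffices for the automaton to remember the most recent letter read, together with its type.

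Concretely, I take the state set
\[
P = \{p_0\} \cup \{p_{\tau, j} \mid j \in [q]\} \cup \{p_{\zeta, \bi} \mid \bi \in [\bc]\},
\]
initial state $p_0$, and accepting set $F = P$. The transition function $\delta$ is defined as follows: from $p_0$, set $\delta(p_0, \tau_j) = p_{\tau, j}$ and $\delta(p_0, \zeta_{\bi}) = p_{\zeta, \bi}$; from $p_{\tau, j}$, set $\delta(p_{\tau, j}, \tau_i) = p_{\tau, i}$ precisely when $j \le i$, and $\delta(p_{\tau, j}, \zeta_{\bi}) = p_{\zeta, \bi}$ with no constraint; from $p_{\zeta, \bi}$, set $\delta(p_{\zeta, \bi}, \tau_j) = p_{\tau, j}$ with no constraint, and $\delta(p_{\zeta, \bi}, \zeta_{\bi'}) = p_{\zeta, \bi'}$ precisely when $\bi \le \bi'$. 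All other inputs leave $\delta$ undefined, so the machine halts and rejects. The state set is finite, of size $1 + q + \prod_{j=1}^q c_j$.

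To verify that this automaton recognizes $\cL$, I would induct on word length. In one direction, every $w \in \cL$ admits a well-defined run ending in $P = F$, since each required transition is exactly an admissible adjacency permitted by \Cref{def:language L}. Conversely, if the automaton accepts $w$, then every adjacent pair $\tau_i \tau_j$ occurring in $w$ must satisfy $i \le j$ (forced by the restriction on $\delta(p_{\tau, i}, \tau_j)$), and every adjacent pair $\zeta_{\bi}\zeta_{\bi'}$ that lies in a common $\tau$-free block must satisfy $\bi \le \bi'$ (forced by the restriction on $\delta(p_{\zeta, \bi}, \zeta_{\bi'})$). Applying transitivity to these adjacent constraints recovers conditions (1) and (2) in full.

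I do not foresee a serious obstacle. The only point that needs careful justification is that resetting the automaton's ``memory'' upon a type change (forgetting the last $\tau$ when a $\zeta$ is read, and vice versa) loses no information needed for acceptance; this is precisely the locality observation made at the outset, since each maximal $\tau$-block and each maximal $\zeta$-block is checked independently by \Cref{def:language L}.
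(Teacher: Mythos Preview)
Your automaton is too small: it misreads condition (2) of \Cref{def:language L}. That condition is stated \emph{for each $j\in[q]$ separately}: in every substring containing no $\tau_j$, the $j$-th entries of the $\zeta$-indices must be weakly increasing. Equivalently, by \Cref{lem:explicit words}, between consecutive $\zeta$-letters $\zeta_{\bi_{l-1}}$ and $\zeta_{\bi_l}$ the constraint $i_{(l-1,j)}\le i_{(l,j)}$ is imposed precisely for those coordinates $j$ with $k_{(l,j)}=0$. Seeing a single $\tau_j$ relaxes the constraint only in coordinate $j$, not in all coordinates.

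Your machine, however, discards the entire last $\zeta$-index as soon as any $\tau$-letter is read. Concretely, take $q=2$ and $\bc=(2,2)$, and consider $w=\zeta_{(1,2)}\tau_1\zeta_{(2,1)}$. In the notation of \Cref{lem:explicit words} this is $\tau^{(0,0)}\zeta_{(1,2)}\tau^{(1,0)}\zeta_{(2,1)}\tau^{(0,0)}$, so $k_{(2,2)}=0$ forces $2=i_{(1,2)}\le i_{(2,2)}=1$, which fails; hence $w\notin\cL$. But your automaton runs $p_0\to p_{\zeta,(1,2)}\to p_{\tau,1}\to p_{\zeta,(2,1)}$ and accepts. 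So the ``locality'' claim that one may forget the previous $\zeta$ upon reading any $\tau$ is false, and the proposed DFA does not recognize $\cL$.

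This is exactly why the paper's construction carries the heavier states $p_{\bi,C,k}$: after a $\zeta_{\bi}$ one must remember both $\bi$ and the set $C$ of $\tau$-subscripts seen since, so that upon the next $\zeta_{\bi'}$ one can enforce $i_k\le i'_k$ for all $k\notin C$ (transition rule~\eqref{eq:trans6}). A correct minimal fix of your approach would use states indexed by pairs $(\bi,C)$ with $\bi\in[\bc]$ and $C\subseteq[q]$ in place of your $p_{\zeta,\bi}$ and $p_{\tau,j}$.
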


\begin{proof} 
We need some further notation. 
We say that a  sequence $C$ of $l \ge 0$ integers $j_1,j_2,\ldots,j_l$ is an \emph{increasing chain in $[q]$} if $1 \le j_1< j_2 < \cdots < j_l \le q$. Define $\max (C)$ as the largest element $j_l$ of $C$. We put $\max (\emptyset) = 0$. 
We denote the set of increasing chains in $[q]$ by $\cC$. Thus, the cardinality of $\cC$ is $2^q$. We write $j \in C$ if $j$ occurs in the chain $C$. 
For any $\bk \in \N_0^q$, we define the sequence of indices $j$ with $k_j > 0$ as its support 
$\Supp (\bk)$. It is an element of $\cC$. For example, one has $\Supp (7, 0, 1, 5, 0) = (1, 3, 4)$. 

Now we define an automaton $\cA$ as follows: Let 
\[
P =\{p_j, p_{\bi}, p_{\bi,C,k} \mid  0\leq j\leq q,\:\bi\in[\bc],\: C \in \cC,\: k \in C  \}
\]
be the set of states, where $p_0$ is the initial state of $\cA$. Let 
\[
F=\{p_j, p_{\bi}, p_{\bi,C, k} \mid 0\leq j\leq q, \bi\in[\bc], \: C \in \cC,\: k = \max (C) \}
\]
be the set of accepting states. Furthermore, define transitions
\begin{align}
     \label{eq:trans1}
\delta (p_j, \tau_{j'}) & = p_{j'}  \text{ if }  j = 0 < j' \le q \text{ or }  1 \le j \le j' \le q, \\
     \label{eq:trans2}
\delta (p_j, \zeta_\bi) & = p_\bi \text{ if } 0 \le j \le q, \: \bi \in [\bc], \\
     \label{eq:trans3}
\delta (p_\bi, \tau_j) & = p_{\bi, C, j} \text{ if }  \bi \in [\bc], \: C \in \cC, \: j \in C, \\
     \label{eq:trans4}
\delta (p_\bi, \zeta_{\bi'}) & = p_{\bi'} \text{ if } \bi, \bi' \in [\bc], \: \bi \le \bi', \\
     \label{eq:trans5}
\delta( p_{\bi, C, j}, \tau_k) & = p_{\bi, C, k}, \text{ if }  \bi \in [\bc], \: C \in \cC, \: j \in C, \: k \text{ directly follows $j$ in $C$ or } k = j, \\ 
     \label{eq:trans6}
\delta( p_{\bi, C, j}, \zeta_{\bi'} ) & = p_{\bi'} \text{ if }  \bi, \bi' \in [\bc],  \: j = \max (C), \: i_k  \le i'_k \text{ whenever } k \notin C. 
\end{align}
If an element of $P \times \Sigma$ does not  satisfy any of the above six conditions then it is not in the domain of $\delta$. 

We claim that $\cA$ recognizes $\cL$. Indeed, let $w \in \Sigma^*$ be a word with exactly $d \ge 0$ $\zeta$-letters. We show by induction on $d$ that $w$ is recognized by $\cA$ if $w \in \cL$,  but any word in $\Sigma^* \setminus \cL$ is not accepted by $\cA$. It turns out that  $w \in \cL$ is accepted 
\begin{itemize}
\item at a state $p_j$ for some $0 \le j \le q$ if $d=0$, 

\item at a state $p_\bi$ for some  $\bi \in [c]$ if $d \ge 1$ and $w$ ends with a $\zeta$-letter, and 

\item at a state $p_{\bi, C, j}$  for some  $\bi \in [\bc], \: C \in \cC, \: j = \max (C)$ if $d \ge 1$ and $w$ ends with a $\tau$-letter. 
\end{itemize}
In particular, this explains the set of accepting states. 

Consider any word $w  \in \Sigma^*$  with exactly $d \ge 0$ $\zeta$-letters. Assume $d = 0$, that 
is, $w = \tau_{l_1} \tau_{l_2} \ldots \tau_{l_t}$. By transition rule \eqref{eq:trans1}, $\cA$  transitions 
from state $p_0$ to any state $p_j$ with $j \in [q]$ using input $\tau_j$. From any $p_j$ with
 $j \in [q]$ the automaton can transition to any state $p_{j'}$ with $j \le j' \le q$ by using input $\tau_{j'}$. Thus, $w$ is accepted by $\cA$ if and only if ${l_1} \le {l_2} \le \cdots \le {l_t}$, that is, $w \in \cL$ (see \Cref{lem:explicit words}). 
 
 Assume now that $d \ge 1$. We proceed in several steps. 
 
 (I) Assume $d = 1$ and $w$ ends with a $\zeta$-letter, that is, 
 \[
 w =  \tau_{l_1} \tau_{l_2} \ldots \tau_{l_t} \zeta_\bi  
 \]
 for some $t \ge 0$. The argument for $d = 0$ shows that  $\tau_{l_1} \tau_{l_2} \ldots \tau_{l_t}$ is  accepted if  and only if it can be written as some $\tau^\bk$. Processing input $\tau^\bk$, the automaton arrives at state $p_j$ with $j = \max (\Supp (\bk))$. Using input $\zeta_\bi$, it then transitions to $p_\bi \in F$ by Rule \eqref{eq:trans2}. Hence $w$ is accepted if and only of $w \in \cL$. 
 
(II) Let $d \ge 1$ and assume $w$ ends with a $\tau$-letter, that is, $w$ can be written as 
 \[
 w = w' \zeta_\bi   \tau_{l_1} \tau_{l_2} \ldots \tau_{l_t}  
 \]
with $t \ge 1$. Furthermore assume that $w' \zeta_\bi$ is accepted by $\cA$ in state $p_\bi$. 
We show that $w$ is accepted by $\cA$ if and only if $w = w'  \zeta_\bi \tau^\bk$ for some $\bk \in \N_0^q$. If $w$ is recognized,  it is accepted in state $p_{\bi, C, \max (C)}$, where $C = \Supp (\bk)$. 

Indeed, let 
$C \in \cC$ be the chain corresponding to the set $\{l_1,\ldots,l_t\}$. Processing input $\tau_{l_1}$, 
Rule \eqref{eq:trans2} yields that $\cA$ transitions to 
state $p_{\bi, C, l_1}$. If $t = 1$, then $l_1 = \max (C)$ and $w$ is accepted in $p_{\bi, C, l_1} \in F$, as claimed.  If $t \ge 2$, Rule \eqref{eq:trans5} shows that $\cA$ can transition from $p_{\bi, C, l_1}$ using 
input $\tau_{l_2}$ precisely if $l_2 \ge l_1$. If transition is possible $\cA$ gets to state $p_{\bi, C, 
l_2}$. Hence Rule \eqref{eq:trans5} guarantees that $\tau_{l_1} \tau_{l_2} \ldots \tau_{l_t}$ can be processed by $\cA$ if and only if $ \tau_{l_1} \tau_{l_2} \ldots \tau_{l_t} = \tau^\bk$ for some non-zero $\bk \in \N_0^q$. In this case $w = w' \zeta_\bi \tau^\bk$ is accepted by $\cA$ in state $p_{\bi, C, 
\max (C)}$, where $C = \Supp (\bk)$. 
 
 (III) Assume now $w \in \Sigma^*$ ends with a $\zeta$-letter, that is, $w$ is of the form 
 \[
 w = w'  \tau_{l_1} \tau_{l_2} \ldots \tau_{l_t} \zeta_\bi,  
 \]
 where $w' \in \cL$ is either empty or ends with a $\zeta$-letter and $t \ge 0$. We show  by induction on $d \ge 1$ that $w$ is recognized by $\cA$ if and only if $w \in \cL$. 
In this case, $w$ is accepted in state $p_\bi$. 
 
 Indeed, if $d = 1$, i.e., $w'$ is the empty word, this has been shown in Step (I). If $d \ge 2$ write $w' = w'' \zeta_{\bi'}$. If $w'$ is not accepted by $\cA$, then so is $w$. Furthermore, the induction hypothesis gives $w' \notin \cL$, which implies $w \notin \cL$. 
 
If $w' = w'' \zeta_{\bi'}$ is recognized by $\cA$ the induction hypothesis yields $w' \in \cL$ and $w'$ is accepted in state $p_{\bi'}$.  
 Step (II) shows that $w'' \zeta_{\bi'}  \tau_{l_1} \tau_{l_2} \ldots \tau_{l_t}$ is accepted by $\cA$ if and only if it can be written as $w'' \zeta_{\bi'} \tau^\bk$ for some $\bk \in \N_0^q$, and so   
 \[
 w = w'' \zeta_{\bi'} \tau^\bk \zeta_\bi.   
\] 
We consider two cases. 

\emph{Case 1.} Suppose $\bk$ is zero, i.e., $\Supp (\bk) = \emptyset$.  Thus, $\cA$ accepted $w'' \zeta_{\bi'} \in \cL$ in state $p_{\bi'}$. Using input $\zeta_\bi$, Rule \eqref{eq:trans4} shows that $\cA$ does not halt in $p_{\bi'}$ if and only if $\bi' \leq \bi$. By \Cref{lem:explicit words}, this is equivalent to $w = w'' \zeta_{\bi'} \zeta_\bi \in \cL$. Furthermore, if $w$ is in $\cL$ it is accepted in state $p_\bi$, as claimed. 

\emph{Case 2.} Suppose $\Supp (\bk) \neq \emptyset$. Set $C = \Supp (\bk)$. By Step (II), $w'' \zeta_{\bi'} \tau^\bk$ is accepted in state $p_{\bi', C, j}$, where $j = \max (C)$. Hence Rule 
\eqref{eq:trans6} gives that input $\zeta_\bi$ can be processed if and only if $i_l'  \le i_l$ whenever 
$l  \notin C$. By \Cref{lem:explicit words}, this is equivalent to $w = w'' \zeta_{\bi'} \tau^\bk \zeta_\bi \in \cL$. 
Moreover, if $w$ is recognized it is accepted in state $p_\bi$, as claimed.  

(IV) By Steps (I) and (III) it remains to consider the case, where $w$ ends with a $\tau$-letter, i.e.,  
$w = w' \zeta_\bi   \tau_{l_1} \tau_{l_2} \ldots \tau_{l_t}$ with $t \ge 1$. By Step (III),   $w' \zeta_\bi$ is recognized by $\cA$ if and only of $w' \zeta_\bi \in \cL$. Furthermore, if $w' \zeta_\bi \in \cL$ then it is accepted in state $p_\bi$. Hence, the assumption in Step (II) is satisfied and we conclude that $w$ is accepted if and only if $w = w' \zeta_\bi \tau^\bk$. The latter is equivalent to $w' \zeta_\bi \tau^\bk \in \cL$ because $w' \zeta_\bi$ is in $\cL$. This completes the argument. 
\end{proof}

\begin{rem}
        \label{rem:graph automaton}
Any finite automaton $\cA = (P, \Sigma, \delta, p_0, F)$ can be represented by a labeled directed graph whose vertex set is the set of states $P$. Accepting states are indicated by double circles. 
There is an edge from vertex $p$ to vertex $p'$ if there is a transition $\delta (p, a) = p'$. In that case, the edge is labeled by all $a \in \Sigma$ such that $\delta (p, a) = p'$. 
\end{rem} 

We illustrate the automata constructed in \Cref{prop:L recognizable} using such a graphical representation. 

\begin{ex}
Let $\cA$ be the automaton constructed in \Cref{prop:L recognizable} if $q = 3$ and $\bc=(1,1,1)$. 
Note the only element in $[\bc]$ is $\one = (1, 1, 1)$. 
To simplify notation, we write $\zeta$ for $\zeta_{1,1,1}$ and $p_{\one}$ for $p_{(1, 1, 1)}$. We denote the non-empty increasing chains in the interval $[3]$ as follows:  $C_1=\{1\}$, $C_2=\{2\}$, $C_3=\{3 \}$, $C_4=\{1, 2\}$, $C_5=\{1, 3\}$, $C_6=\{2, 3\}$, $C_7=\{1,2,3\}$ and write $p_{i,j}$ instead of  $p_{\mathbf{1}, C_i,j}$. Using this notation, the constructed automaton $\cA$ is represented by the following graph: 

\begin{tikzpicture}[shorten >=1pt,node distance=2.36cm,auto]
\tikzstyle{every state}=[fill={rgb:black,1;white,10}]
\node[state,initial,accepting]   (p_0)                      {$p_0$};
\node[state,accepting]           (p_1) [ below left of=p_0]     {$p_1$}; 
\node[state,accepting]           (p_2) [right   of=p_1]     {$p_2$};
\node[state,accepting]           (p_3) [right of=p_2]     {$p_3$};
\node[]           (p_4) [right of=p_3]     {};
\node[state,accepting] (p) [right of=p_4]  {$p_{\mathbf{1}}$};
\node[state] (p_5) [below left of=p]  {$p_{71}$};
\node[state] (p_6) [below  of=p]  {$p_{61}$};
\node[state] (p_7) [below right of=p]  {$p_{52}$};
\node[state] (p_8) [right of=p]  {$p_{41}$};
\node[state, accepting] (p_9) [above right of=p]  {$p_{31}$};
\node[state, accepting] (p_10) [above of=p]  {$p_{21}$};
\node[state, accepting] (p_11) [above left of=p]  {$p_{11}$};
\node[state, accepting] (p_12) [right of=p_8]  {$p_{42}$};
\node[state, accepting] (p_13) [below right of=p_7]  {$p_{53}$};
\node[state, accepting] (p_14) [below of=p_6]  {$p_{63}$};
\node[state] (p_15) [below left of=p_5]  {$p_{72}$};
\node[state, accepting] (p_16) [below left of=p_15]  {$p_{73}$};
\path[->]
(p_1)   edge  [loop below]          node {$\tau_1$} (p_1)
(p_2)   edge  [loop below]          node {$\tau_{2}$} (p_2)
(p_3)   edge  [loop below]          node {$\tau_{3}$} (p_3)
(p)   edge  [loop left]          node {$\zeta$} (p)
(p_11)   edge  [loop above]          node {$\tau_1$} (p_11) 
(p_10)   edge  [loop above]          node {$\tau_2$} (p_10) 
(p_9)   edge  [loop above]          node {$\tau_3$} (p_9) 
(p_8)   edge  [loop below]          node {$\tau_1$} (p_8) 
(p_7)   edge  [loop below]          node {$\tau_1$} (p_7) 
(p_6)   edge  [loop below]           node {$\tau_2$} (p_6) 
(p_5)   edge  [loop below]          node {$\tau_1$} (p_5) 
(p_15)   edge  [loop below]          node {$\tau_2$} (p_15)
(p_16)   edge  [loop below]          node {$\tau_3$} (p_16) (p_12)   edge  [loop below]          node {$\tau_2$} (p_12) (p_13)   edge  [loop below]          node {$\tau_3$} (p_13)
(p_14)   edge  [loop below]          node {$\tau_3$} (p_14)
(p_0)   edge          node {$\tau_1$} (p_1)
(p_0)   edge            node {$\tau_2$} (p_2)
(p_0)   edge            node {$\tau_3$} (p_3)
(p_1)   edge            node {$\tau_2$} (p_2)
(p_1)   edge   [bend right]            node {$\tau_3$} (p_3)
(p_2)   edge            node {$\tau_3$} (p_3)
(p_0)   edge         node {$\zeta$} (p)
(p_1)   edge [bend right]              node {$\zeta$} (p)
(p_2)   edge      [bend right]        node {$\zeta$} (p)
(p_3)   edge      [bend right]        node {$\zeta$} (p)
(p_2)   edge      [bend right]        node {$\zeta$} (p)
(p_9)   edge   [bend right]           node {$\zeta$} (p)
(p_10)   edge     [bend right]       node {$\zeta$} (p)
(p_11)   edge    [bend right]         node {$\zeta$} (p)
(p_12)   edge   [bend right]         node {$\zeta$} (p)
(p_13)   edge      [bend right]       node {$\zeta$} (p)
(p_14)   edge      [bend right]       node {$\zeta$} (p)
(p_16)   edge      [bend right]       node {$\zeta$} (p)
(p)   edge            node {$\tau_{1}$} (p_5)
(p)   edge            node {$\tau_{2}$} (p_6)
(p)   edge             node {$\tau_{1}$} (p_7)
(p)   edge            node {$\tau_{1}$} (p_8)
(p)   edge          node {$\tau_{1}$} (p_11)
(p)   edge          node {$\tau_{2}$} (p_10)
(p)   edge          node {$\tau_{3}$} (p_9)
(p_8)   edge          node {$\tau_{2}$} (p_12)
(p_7)   edge          node {$\tau_{3}$} (p_13)
(p_6)   edge          node {$\tau_{3}$} (p_14)
(p_5)   edge          node {$\tau_{2}$} (p_15)
(p_15)   edge          node {$\tau_{3}$} (p_16)
;  
\end{tikzpicture}

\end{ex}

\begin{rem} 
     \label{rem: q = 3}
The automaton constructed in \Cref{prop:L recognizable} is often not the smallest automaton that 
recognizes the language $\cL$.  Using the minimization technique described in 
 \cite[Theorem 4.26]{HU},  one can obtain an automaton  with fewer states that also recognizes $\cL$.  For example, if $\bc = (1, 1, 1)$, this produces an automaton with only four states: 
 
\begin{tikzpicture}[shorten >=1pt,node distance=2.0cm,auto]
\tikzstyle{every state}=[fill={rgb:black,1;white,10}]
\node[state,initial,accepting]        (p_1)     {$p_1$}; 
\node[state,accepting]           (p_2) [right   of=p_1]     {$p_2$};
\node[state,accepting]           (p_3) [right of=p_2]     {$p_3$};
\node[]           (p_4) [right of=p_3]     {};
\node[state,accepting] (p_111) [below of=p_2] {$p_{\mathbf{1}}$};
\path[->]
(p_1)   edge  [loop above]          node {$\tau_1$} (p_1)
(p_2)   edge  [loop above]         node {$\tau_{2}$} (p_2)
(p_3)   edge  [loop above]          node {$\tau_{3}$} (p_3)
(p_111)   edge  [loop below]          node {$\zeta$} (p_111)
(p_1)   edge            node {$\tau_2$} (p_2)
(p_1)   edge   [bend left]            node {$\tau_3$} (p_3)
(p_2)   edge            node {$\tau_3$} (p_3)
(p_1)   edge [bend right]               node {$\zeta$} (p_111)
(p_3)   edge   [bend left]   node {$\zeta$} (p_111)
(p_2)   edge      [bend left]          node {$\zeta$} (p_111)
(p_111)   edge             node {$\tau_{1}$} (p_1)
(p_111)   edge        [bend left]       node {$\tau_{2}$} (p_2)
(p_111)   edge            node {$\tau_{3}$} (p_3)
;  
\end{tikzpicture}
\end{rem}

In order to relate a language $\cL$ on an alphabet $\Sigma$  to a Hilbert series we need a suitable weight function. Let $T = \K[s_1,\ldots,s_k]$ be a polynomial ring in $k$ variables and denote by $\Mon (T)$ the set of monomials in $T$. A \emph{weight function} is a monoid homomorphism $\rho \colon \Sigma^{\star} \rightarrow \Mon (T)$ such that $\rho(w)=1$ only if $w$ is the empty word. The corresponding generating function is a formal power series in variables $s_1,\ldots,s_k$: 
\[
P_{\mathcal{L},\rho} (s_1,..,s_k) = \sum \limits_{w \in \mathcal{L} } \rho(w). 
\]
We will use the following result (see, e.g., \cite{H} or \cite[Theorem 4.7.2]{St}). 

\begin{thm} 
     \label{thm:rat hilb of lang}
If  $\rho$ is any  weight function on a regular language $\mathcal{L}$ then $P_{\mathcal{L},\rho}$ is a rational function in $\Q(s_1,..,s_k)$. 
\end{thm}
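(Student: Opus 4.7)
The plan is to exploit the characterization of regular languages as those recognized by deterministic finite automata and to translate the accepting structure of such an automaton into a finite linear system over $\Q[s_1,\ldots,s_k]$ whose solution yields $P_{\cL,\rho}$ as a rational function. By assumption $\cL$ is regular, so there exists a DFA $\cA = (P,\Sigma,\delta,p_0,F)$ recognizing $\cL$, with transition map $\delta \colon D \to P$ defined on some $D \subset P \times \Sigma$.

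For each state $p \in P$, let $\cL_p \subset \Sigma^*$ denote the language of words $w$ such that reading $w$ starting from $p$ ends in an accepting state; in particular $\cL_{p_0}=\cL$. Set $f_p = \sum_{w \in \cL_p}\rho(w)$, which I first check is a well-defined element of $\K[[s_1,\ldots,s_k]]$: since $\rho$ is a monoid homomorphism with $\rho(w)=1$ only for the empty word, each letter $a \in \Sigma$ satisfies $\rho(a)\neq 1$, so $\rho(a)$ is a monomial of positive total degree; thus only finitely many words $w$ have $\rho(w)$ of any given total degree, making the sum a well-defined formal power series. Partitioning $\cL_p$ by the first letter of each word (and separating out the empty word when $p \in F$) yields the recursion
\[
f_p \;=\; \varepsilon_p \;+\; \sum_{\substack{a \in \Sigma \\ (p,a)\in D}} \rho(a)\, f_{\delta(p,a)},
\]
where $\varepsilon_p = 1$ if $p \in F$ and $\varepsilon_p=0$ otherwise. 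Stacking these equations over all $p \in P$ produces a matrix equation $(I-M)\mathbf{f}=\mathbf{v}$, where $M$ is a $|P|\times|P|$ matrix whose entries are either $0$ or nonconstant monomials $\rho(a)\in\Q[s_1,\ldots,s_k]$, and $\mathbf{v} \in \{0,1\}^{|P|}$ is the indicator vector of $F$.

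The final step is to solve this system over $\Q(s_1,\ldots,s_k)$. Specializing $s_1=\cdots=s_k=0$ sends every nonconstant monomial entry of $M$ to $0$, so $\det(I-M) \in \Q[s_1,\ldots,s_k]$ has constant term $1$ and is therefore a nonzero element of $\Q(s_1,\ldots,s_k)$. Cramer's rule then expresses each $f_p$ as a quotient of polynomials with rational coefficients; taking $p=p_0$ gives
\[
P_{\cL,\rho}(s_1,\ldots,s_k) \;=\; f_{p_0}(s_1,\ldots,s_k) \;\in\; \Q(s_1,\ldots,s_k),
\]
as desired. The argument is essentially routine linear algebra once the DFA is in hand, and I anticipate no serious obstacle; the only delicate point is verifying that each $f_p$ is a legitimate formal power series, which is precisely why the definition of a weight function insists that $\rho(a) \neq 1$ for every letter $a \in \Sigma$.
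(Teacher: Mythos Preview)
Your proof is correct. The paper does not actually prove this theorem in its own right---it simply cites \cite{H} and \cite[Theorem~4.7.2]{St}---but the transfer-matrix computation it spells out later in Section~5 is exactly the linear-algebra argument you give: your system $(I-M)\mathbf{f}=\mathbf{v}$ is, up to a transpose, the paper's identity $P_{\cL,\rho}=\mathbf{u}^T\bigl(\id_N-\sum_{a\in\Sigma}\rho(a)M_{\cA,a}\bigr)^{-1}\mathbf{e}_1$, and your invertibility check (constant term $1$ in $\det(I-M)$) is the missing justification the paper leaves implicit.
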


We are ready to establish the ingredient of the proof of \Cref{thm:main} whose proof we had postponed. 

\begin{proof}[Proof of \Cref{prop:second step}]
Since $I_\n = \ker \Phi_n$ and $\Phi_n$ is a homomorphism of degree $q$, we get $R_\n/I_n \cong A_\n$ and, for each $d \in \Z$, 
\[
\dim_{\mathbb{K}} [{R_{\mathbf{n}}\slash I_{\mathbf{n}}}]_{d} = \dim_\K [A_{\n}]_{d q}.  
\]
Recall that the algebra $A_\n$ is generated by monomials. Hence, every graded component has a $\K$-basis consisting of monomials. It follows that $\dim_\K [A_{\n}]_{d q} =\# \Mon ([A_{\mathbf{n}}]_{d q})$. Therefore we get for the equivariant Hilbert series of the filtration $\CI$: 
\[
equivH_{\mathscr{I}}(s_1,..,s_q, t)= 
\sum_{\n \in \N^q} \sum_{d \ge 0} \# \Mon ([A_{\mathbf{n}}]_{dq}) \cdot s^\n t^d, 
\]
where $s^\n = s_1^{n_1} \cdots s_q^{n_q}$ if $\n = (n_1,\ldots,n_q)$. 

Consider now the language $\cL$ described in \Cref{def:language L}. Define a weight function $\rho \colon \Sigma^* \to \Mon(\K[s_1,\ldots,s_q, t])$ by $\rho (\tau_j) = s_j$ and $\rho (\zeta_\bi) = t$ for $\bi \in [\bc]$. Thus, for $w \in \cL$, one obtains $\rho (w) = s^\n t^d$ if $d$ is the number of 
$\zeta$-letters occurring in $w$ and $n_j$ is the number of appearances of $\tau_j$ in $w$. Hence \Cref{prop:bijection} gives that the number of words $w \in \cL_\n$ with $\rho (w) = s^\n t^d$ is precisely $ \# \Mon ([A_{\mathbf{n} + \one}]_{dq})$. Since  $\mathcal{L}$ is the disjoint union of all $\cL_\n$, it follows
\begin{equation} 
      \label{eq:hilb is gen function} 
s_1 \cdots s_q \cdot  equivH_{\mathscr{I}}(s_1,..,s_q, t) = \sum_{\n \in \N_0^q} \sum_{w \in \cL_n} \rho (w) = P_{\cL, \rho} (s_1,..,s_q, t). 
\end{equation}
As the right-hand side is rational by \Cref{thm:rat hilb of lang}, the claim follows. 
\end{proof}
 
\begin{rem}
The method of proof for \Cref{thm:main} is rather general and can also be used in other situations. 
An easy generalization is obtained as follows. Fix $(a_1,\ldots,a_q) \in \N^q$.  For $\n \in \N^q$, consider the homomorphism
\begin{align*}
\widetilde{\Phi}_{\n} \colon R_{\n} = \K[x_{\bi, \bk}  \mid (\bi, \bk) \in [\bc] \times [\n] ] 
& \to \K[y_{j, i_j, k_j} \mid j \in [q], i_j \in c_j, k_j \in [n_j] ] = S_{\n} \\
x_{\bi, \bk} & \longmapsto \prod_{j=1}^q  y_{j, i_j, k_j}^{a_j}, 
\end{align*}
and set $\widetilde{A}_\n = \im \Phi_{\n} = \K \left [ \prod_{j=1}^q  y_{j, i_j, k_j}^{a_j}  \mid  i_j \in c_j, k_j \in [n_j] \right ]$, $\widetilde{I}_\n = \ker \widetilde{\Phi}_\n$. Then $\widetilde{\CI} = \{\widetilde{I}_\n\}_{\n \in \N^q}$ also is an $S_{\infty}^q$-invariant  filtration whose equivariant Hilbert series is rational. 
Indeed, this follows  using the language $\cL$ as above with the following modifications. In the definition of the map $\textbf{m}$ change  Rule (b) to $\widetilde{\textbf{m}} (\zeta_\bi w) = \prod_{j=1}^q  y_{j, i_j, 1}^{a_j}\widetilde{\textbf{m}} (w)$, but keep Rules (a), (c) to obtain a 
map $\widetilde{\textbf{m}} \colon \Sigma^{\star} \rightarrow \Mon (S)$. It induces bijections $\cL_n \to \Mon (\widetilde{A}_{\n + \one})$ as in \Cref{prop:bijection}.  Observe that $[R_\n/\widetilde{I}_\n]_d \cong [\widetilde{A}_\n]_{d a}$, where $a = a_1 + \cdots + a_q$. Thus, using the same weight function $\rho$ as above, we obtain $s_1 \cdots s_q \cdot  equivH_{\mathscr{I}}(s_1,..,s_q, t) = P_{\cL, \rho} (s_1,..,s_q, t)$. 

A systematic study of substantial generalizations will be presented in \cite{MN}. 
\end{rem}



\section{Explicit formulas}

We provide explicit formulas for the Hilbert series of hierarchical models considered in \Cref{thm:main}. 

It is useful to begin by discussing Segre products more generally. To this end we temporarily use some new notation. 

\begin{lemma}
    \label{lem:Segre prod}
Let $ A = \K[a_1,\ldots,a_s] \subset R$  and $B = \K[b_1,\ldots,b_t] \subset S$ be subalgebras of polynomial rings  $R = \K[x_1,\ldots,x_m]$ and  $S = \K[y_1,\ldots,y_n]$ that are generated by monomials $a_1,\ldots,a_s$ of degree $d_1$ and  monomials $b_1,\ldots,b_t$ of degree $d_2$, respectively. 
Let $C$ be the subalgebra of $\K[x_1,\ldots,x_m, y_1,\ldots,y_n]$ that is generated by all monomials $a_i b_j$ with $i \in [s]$ and $j \in [t]$. Using the gradings induced from the corresponding polynomials rings one has, for all $k \in \Z$, 
\[
\dim_{\K} [C]_{k (d_1 + d_2)} = \dim_{\K} [A]_{k d_1} \cdot \dim_{\K} [B]_{k d_2}. 
\]
\end{lemma}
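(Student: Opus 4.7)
The plan is to exploit the fact that $A$, $B$, and $C$ are all monomial subalgebras, and that the variables $x_i$ and $y_j$ are disjoint. In each case, a graded component has a $\K$-basis consisting of the monomials (in the ambient polynomial ring) that lie in that component. So counting dimensions reduces to counting such monomials, and I would set up a bijection of monomial sets.

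First I would describe $\Mon([A]_{k d_1})$. Since $A$ is generated as a $\K$-algebra by the degree-$d_1$ monomials $a_1,\ldots,a_s$, every element of $[A]_{k d_1}$ is a $\K$-linear combination of products of generators whose total degree equals $k d_1$; because all generators have degree exactly $d_1$, each such product uses exactly $k$ of them. Hence $\Mon([A]_{k d_1})$ is precisely the set of distinct monomials in $R$ of the form $a_{i_1} \cdots a_{i_k}$ with $i_1,\ldots,i_k \in [s]$. The analogous description holds for $\Mon([B]_{k d_2})$, consisting of monomials $b_{j_1} \cdots b_{j_k}$ in $S$. Likewise, $C$ is generated by the degree-$(d_1+d_2)$ monomials $a_i b_j$, so $\Mon([C]_{k(d_1+d_2)})$ consists of the distinct monomials $(a_{i_1} b_{j_1}) \cdots (a_{i_k} b_{j_k}) = (a_{i_1} \cdots a_{i_k})(b_{j_1} \cdots b_{j_k})$ in $\K[x_1,\ldots,x_m,y_1,\ldots,y_n]$.

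Next I would define the multiplication map
\[
\mu \colon \Mon([A]_{k d_1}) \times \Mon([B]_{k d_2}) \longrightarrow \Mon([C]_{k(d_1+d_2)}), \quad (\alpha,\beta) \mapsto \alpha\beta.
\]
The description above shows $\mu$ is surjective. Injectivity is the crucial step, and it is where disjointness of the variables enters: since $\alpha$ is a monomial in $x_1,\ldots,x_m$ only and $\beta$ is a monomial in $y_1,\ldots,y_n$ only, any monomial $\alpha\beta$ in $\K[x_1,\ldots,x_m,y_1,\ldots,y_n]$ uniquely determines the pair $(\alpha,\beta)$ by separating its $x$-part from its $y$-part. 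Hence $\mu$ is a bijection.

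Finally, since graded components of monomial subalgebras are free on their monomials,
\[
\dim_\K [A]_{k d_1} = \# \Mon([A]_{k d_1}), \quad \dim_\K [B]_{k d_2} = \# \Mon([B]_{k d_2}),
\]
and similarly for $C$. Combining these with the bijection $\mu$ yields the desired equality. I do not foresee a genuine obstacle here; the only subtlety is making sure to argue, before counting, that products of exactly $k$ generators (rather than arbitrary products summing to the given degree) exhaust each relevant graded component — which follows immediately from the generators all having the same degree.
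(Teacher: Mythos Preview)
Your proposal is correct and follows essentially the same approach as the paper: the paper's proof is a single sentence noting that the graded components of $A$, $B$, $C$ have $\K$-bases consisting of monomials in the algebra generators of the appropriate degree, and you have simply spelled out that observation in detail, including the bijection $\mu$ and the use of disjointness of the $x$- and $y$-variables to see it is injective.
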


\begin{proof}
This follows from the fact that the non-trivial degree components of the algebras $A, B, C$ have $\K$-bases generated by monomials in the respective algebra generators of suitable degrees. 
\end{proof}

It is customary to consider the algebras occurring in \Cref{lem:Segre prod} as standard graded algebras that are generated in degree one by redefining their grading. In the new gradings, the degree $k$ elements of $A$ are elements that have degree $k d_1$, considered as polynomials in $R$, and similarly the degree $k$ elements of $C$ have degree $k (d_1 + d_2)$ when considered as elements of $\K[x_1,\ldots,x_m, y_1,\ldots,y_n]$. Using this new grading, the statement in the above lemma reads 
\begin{equation}
    \label{eq:hilb Segre}
\dim_{\K} [C]_{d} = \dim_{\K} [A]_{d} \cdot \dim_{\K} [B]_{d}.
\end{equation}
This justifies to call $C$ the \emph{Segre product} of the algebras $A$ and $B$. We denote it by $A \boxtimes B$.


Iterating the above construction we get the following consequence. 

\begin{cor}
          \label{cor:hilb Segre}
Let $A_1,\ldots,A_k$ be subalgebras of polynomial rings and assume every $A_i$ generated by finitely many monomials of degrees $d_i$. Regrade such that every $A_i$ is an algebra that is generated in degree one. Then one has 
\[
\dim_{\K} [A_1 \boxtimes \cdots \boxtimes A_k]_d = \prod_{i = 1}^k \dim_{\K} [A_i]_d. 
\]
\end{cor}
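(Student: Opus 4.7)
The natural plan is induction on $k$. The base case $k=1$ is vacuous, and $k=2$ is precisely Lemma \ref{lem:Segre prod} combined with the regrading convention \eqref{eq:hilb Segre}: in the new grading, the $k d_1$-th ambient component of $A$ becomes its $k$-th component, and similarly for $B$ and $C$, so the identity $\dim_\K [C]_{k(d_1+d_2)} = \dim_\K[A]_{kd_1}\cdot \dim_\K[B]_{kd_2}$ turns into $\dim_\K[A\boxtimes B]_d = \dim_\K[A]_d\cdot\dim_\K[B]_d$.

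For the inductive step, suppose the claim holds for $k-1$ algebras. Set $B = A_1 \boxtimes \cdots \boxtimes A_{k-1}$, viewed as a subalgebra of the tensor product of the corresponding polynomial rings. The key observation is that $B$ is itself generated, as an algebra, by the monomials $a_{1,i_1} a_{2,i_2} \cdots a_{k-1,i_{k-1}}$, each of (ambient) degree $D = d_1 + \cdots + d_{k-1}$. Thus $B$ satisfies the hypotheses of Lemma \ref{lem:Segre prod} with the single monomial-degree $D$, and so does $A_k$ with degree $d_k$. Applying that lemma to $B$ and $A_k$, we get in ambient gradings
\[
\dim_\K [B \boxtimes A_k]_{d(D+d_k)} = \dim_\K[B]_{dD}\cdot \dim_\K[A_k]_{d d_k}.
\]
After regrading both sides so that $B$, $A_k$, and their Segre product are generated in degree one, this becomes $\dim_\K[B\boxtimes A_k]_d = \dim_\K[B]_d\cdot\dim_\K[A_k]_d$, and the induction hypothesis $\dim_\K[B]_d = \prod_{i=1}^{k-1}\dim_\K[A_i]_d$ yields the desired formula.

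There is one small bookkeeping point to check, namely that the iterated Segre product $(A_1 \boxtimes \cdots \boxtimes A_{k-1}) \boxtimes A_k$ agrees with the algebra one would obtain by directly forming the $k$-fold Segre product, i.e.\ the subalgebra generated by all products $a_{1,i_1}\cdots a_{k,i_k}$ in the ambient polynomial ring. This is immediate from the definition, since in both cases one generates by the same set of monomials and each such monomial has the same total degree $D + d_k$; hence the two algebras coincide as graded subrings and the associativity of $\boxtimes$ needed for the induction is automatic.

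The only mildly nontrivial point is the compatibility of the regradings along the induction: at each stage one must make sure that the regrading applied to $A_1 \boxtimes \cdots \boxtimes A_{k-1}$ at the inductive step matches the final regrading of the $k$-fold product, but this is exactly the content of \eqref{eq:hilb Segre} applied once and then once again, so no obstacle arises. I expect no serious difficulty beyond this bookkeeping; the statement is essentially a formal consequence of \Cref{lem:Segre prod} via induction.
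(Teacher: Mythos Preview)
Your proposal is correct and matches the paper's approach exactly: the paper simply says ``Iterating the above construction we get the following consequence'' before stating the corollary, and your induction on $k$ with the inductive step applying Lemma~\ref{lem:Segre prod} to $B = A_1 \boxtimes \cdots \boxtimes A_{k-1}$ and $A_k$ is precisely that iteration spelled out. The bookkeeping you flag about associativity and compatibility of regradings is the only content beyond the lemma, and you handle it correctly.
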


We need an elementary observation. 

\begin{lemma}
      \label{lem:elementary}
Let $\omega \in \C$ be a primitive $k$-th root of unity. If 
\[
f(t) = \sum_{n=0}^{\infty} c_n t^n 
\]
is a formal power series in $t$ with complex coefficients, then 
\[
 \sum_{n=0}^{\infty} c_{k n} x^{k n} = \frac{1}{k} \left [f(t) + f(\omega t) + \cdots + f (\omega^{k-1} t) \right ]. 
\]
\end{lemma}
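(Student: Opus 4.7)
The plan is a standard roots-of-unity filter argument. The key identity is
\[
\sum_{j=0}^{k-1} \omega^{jn} = \begin{cases} k & \text{if } k \mid n, \\ 0 & \text{otherwise,} \end{cases}
\]
which I would verify first. When $k \mid n$ we have $\omega^n = 1$, so each summand is $1$ and the sum equals $k$. Otherwise $\omega^n$ is a $k$-th root of unity different from $1$, and the geometric series gives $\sum_{j=0}^{k-1} (\omega^n)^j = \frac{(\omega^n)^k - 1}{\omega^n - 1} = 0$.

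Next, I would substitute $\omega^j t$ for $t$ in $f$ for each $j = 0, 1, \ldots, k-1$ and sum. Interchanging the finite sum over $j$ with the formal power series, I get
\[
\sum_{j=0}^{k-1} f(\omega^j t) \;=\; \sum_{j=0}^{k-1} \sum_{n=0}^{\infty} c_n \omega^{jn} t^n \;=\; \sum_{n=0}^{\infty} c_n t^n \Bigl( \sum_{j=0}^{k-1} \omega^{jn} \Bigr).
\]
By the identity above, the inner sum vanishes except when $k \mid n$, where it equals $k$, so only the terms with $n = km$ survive:
\[
\sum_{j=0}^{k-1} f(\omega^j t) \;=\; k \sum_{m=0}^{\infty} c_{km} t^{km}.
\]
Dividing by $k$ yields the claimed formula (reading the $x$ on the left-hand side of the statement as $t$).

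The argument is entirely routine; there is no real obstacle, since swapping a finite sum with an infinite sum is always valid at the level of formal power series (coefficient-by-coefficient). The only minor point worth noting is that the assertion makes sense purely formally and does not require any convergence hypothesis on $f(t)$.
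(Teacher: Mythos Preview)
Your proof is correct and follows essentially the same approach as the paper: both establish the identity $\sum_{j=0}^{k-1} (\omega^j)^n = k$ if $k \mid n$ and $0$ otherwise via geometric sums, and then deduce the claim. You simply spell out the substitution and interchange of sums more explicitly than the paper, which just says ``The claim follows.''
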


\begin{proof}
Using geometric sums one gets, for every $n \in \N_0$, 
\[
\sum_{j=0}^{k-1} (\omega^j)^n = \begin{cases}
k & \text{ if $k$ divides $n$} \\
0 & \text{ otherwise.}
\end{cases}
\]
The claim follows. 
\end{proof}

\begin{prop}
      \label{prop:explict computation}
Fix any $q \in \N$ and let $\CI$ be the $S_{\infty}^q$-invariant  filtration considered in \Cref{prop:second step}. 
%
For $j \in [q]$, let $\om_j$ be a $c_j$-th primitive root of unity. Then the equivariant Hilbert series of $\CI$ is 
\begin{align*}
equivH_{\mathscr{I}}(s_1,\dots,s_q, t) 
& = \frac{1}{c_1 \cdots c_q}  \sum_{m_1 \in [c_1],\ldots, m_q \in [c_q]} 
 \frac{\omega_1^{m_1} s_1^{\frac{1}{c_1}} \cdots \omega_q^{m_q} s_q^{\frac{1}{c_q}}}{(1 - \omega_1^{m_1} s_1^{\frac{1}{c_1}})\cdots (1 - \omega_q^{m_q} s_q^{\frac{1}{c_q}}) - t}. 
\end{align*}
\end{prop}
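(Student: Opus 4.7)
The plan is to combine the Segre product description of $A_{\n}$ with the roots-of-unity filter of \Cref{lem:elementary}. First, I would observe that since each generator $\prod_{j=1}^q y_{j,i_j,k_j}$ of $A_\n$ has degree $q$ in $S_\n$, after regrading $A_\n$ so these generators have degree one, $A_\n$ coincides with the Segre product $P_{1,\n} \boxtimes \cdots \boxtimes P_{q,\n}$, where $P_{j,\n} := \K[y_{j,i_j,k_j} \mid i_j \in [c_j],\, k_j \in [n_j]]$ is a polynomial ring in $c_j n_j$ variables. Indeed, the Segre product is generated in degree one precisely by the products $y_{1,i_1,k_1} \cdots y_{q,i_q,k_q}$, which are the generators of $A_\n$. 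Under this new grading $\Phi_\n$ becomes a graded isomorphism $R_\n/I_\n \to A_\n$, so \Cref{cor:hilb Segre} yields
\[
\dim_\K [R_\n/I_\n]_d \;=\; \prod_{j=1}^q \dim_\K [P_{j,\n}]_d \;=\; \prod_{j=1}^q \binom{d + c_j n_j - 1}{d}.
\]

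Substituting this expression into \Cref{def:equiv Hilb} and interchanging the sums gives
\[
equivH_{\mathscr{I}}(s_1,\dots,s_q,t) \;=\; \sum_{d \ge 0} t^d \prod_{j=1}^q \Bigl(\sum_{n_j \ge 1}\binom{d + c_j n_j - 1}{d}\, s_j^{n_j}\Bigr).
\]
The main computational step is to close up each inner factor. Starting from the elementary identity
\[
\sum_{m \ge 1} \binom{d+m-1}{d}\, u^m \;=\; \frac{u}{(1-u)^{d+1}},
\]
which is an index shift of $(1-u)^{-(d+1)} = \sum_{m \ge 0}\binom{d+m}{d} u^m$, I would extract the subseries indexed by multiples of $c_j$ using \Cref{lem:elementary} with $\omega = \omega_j$ and $k = c_j$, and then substitute $u = s_j^{1/c_j}$ to conclude
\[
\sum_{n_j \ge 1} \binom{d + c_j n_j - 1}{d}\, s_j^{n_j} \;=\; \frac{1}{c_j}\sum_{m_j=1}^{c_j} \frac{\omega_j^{m_j}\, s_j^{1/c_j}}{\bigl(1 - \omega_j^{m_j}\, s_j^{1/c_j}\bigr)^{d+1}}.
\]

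Finally, substituting this back into the product, pulling the finite sum over $(m_1,\dots,m_q) \in \prod_j [c_j]$ outside the sum over $d$, and evaluating the resulting geometric series in $t$ for each fixed multi-index---whose common ratio is $t/\prod_j(1-\omega_j^{m_j}s_j^{1/c_j})$---collapses the expression into exactly the closed form stated in the proposition, with numerator $\prod_j \omega_j^{m_j} s_j^{1/c_j}$ and denominator $\prod_j (1 - \omega_j^{m_j} s_j^{1/c_j}) - t$. The main obstacle is simply the bookkeeping needed to apply the roots-of-unity filter cleanly across $q$ independent indices; no conceptual difficulty is expected. A minor subtlety is that the individual summands involve the formal fractional powers $s_j^{1/c_j}$, but for each $j$ the average over all $c_j$-th roots of unity is invariant under $s_j^{1/c_j} \mapsto \omega_j s_j^{1/c_j}$, so the total expression is independent of the chosen branch and lies in $\Q(\omega_1,\ldots,\omega_q)(s_1,\ldots,s_q,t)$, as claimed.
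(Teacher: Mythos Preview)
Your proposal is correct and follows essentially the same approach as the paper: both identify $A_\n$ as a Segre product of polynomial rings in $c_j n_j$ variables, apply \Cref{cor:hilb Segre} to factor the Hilbert function, use the roots-of-unity filter of \Cref{lem:elementary} to close up each inner sum, and finish with a geometric series in $t$. Your additional remark on branch-independence of $s_j^{1/c_j}$ is a nice clarification that the paper leaves implicit.
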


\begin{proof} 
By definition of the map $\Phi_{\cM_{\n}}$, its image is isomorphic to the Segre product of polynomial rings of dimension $c_j n_j$ with $j =1,\ldots,q$. Hence \Cref{cor:hilb Segre}  gives for the equivariant Hilbert series 
\begin{align}
      \label{eq:use of Segre}
equivH_{\mathscr{I}}(s_1,\dots,s_q, t) 
& = \sum_{d\geq 0,\n \in \N^q} \binom{c_1n_1 + d-1}{d} \cdots \binom{c_q  n_q  + d-1}{d}s_1^{n_1}\dots s_q^{n_q}  t^d   \nonumber \\
& = \sum_{d \ge 0} \left \{ \prod_{j = 1}^q \left [ \sum_{n_j \in \N} \binom{c_j n_j + d-1}{d} s_j^{n_j}  \right ] \right  \} t^d 
\end{align}
For any integer $d \ge 0$, one computes
\begin{align*}
\sum_{n \in \N} \binom{n+d-1}{d} s^n = s \sum_{n \in \N_0} \binom{d+n}{n} s^n = \frac{s}{(1 - s)^{d+1}}. 
\end{align*} 
Combined with \Cref{lem:elementary} and using a $c$-th primitive root of unity $\omega \in \C$, we obtain, for any integer $c > 0$, 
\[
\sum_{n \in \N} \binom{c n+d-1}{d} s^n = \frac{1}{c} \sum_{m \in [c]} \frac{\omega^m s^{\frac{1}{c}}}{(1 - \omega^m s^{\frac{1}{c}})^{d+1}}. 
\]
Applying the last formula to the inner sums in Equation \eqref{eq:use of Segre} we get
\begin{align*}
\hspace{4em}&\hspace{-4em}  
equivH_{\mathscr{I}}(s_1,\dots,s_q, t) \\
& = \sum_{d \ge 0} \left \{ \prod_{j = 1}^q \left [ \frac{1}{c_j} \frac{\omega_j^m s_j^{\frac{1}{c_j}}}{(1 - \omega_j^m s_j^{\frac{1}{c_j}})^{d+1}} \right ] \right  \} t^d  \\
& = \sum_{d \ge 0}  \frac{1}{c_1 \cdots c_q}  \left \{ \sum_{m_1 \in [c_1],\ldots, m_q \in [c_q]}  \;
  \frac{\omega_1^{m_1} s_1^{\frac{1}{c_1}}}{(1 - \omega_1^{m_1} s_1^{\frac{1}{c_1}})^{d+1}}  \cdots 
  \frac{\omega_q^{m_q} s_q^{\frac{1}{c_q}}}{(1 - \omega_q^{m_q} s_q^{\frac{1}{c_q}})^{d+1}} 
  \right  \} t^d \\
& =     \frac{1}{c_1 \cdots c_q} \; \sum_{m_1 \in [c_1],\ldots, m_q \in [c_q]} \; 
 \frac{\omega_1^{m_1} s_1^{\frac{1}{c_1}} \cdots \omega_q^{m_q} s_q^{\frac{1}{c_q}}}{(1 - \omega_1^{m_1} s_1^{\frac{1}{c_1}})\cdots (1 - \omega_q^{m_q} s_q^{\frac{1}{c_q}}) - t},   
\end{align*}
as claimed. 
\end{proof}

By \Cref{thm:main}, the above formula for the equivariant Hilbert series can be re-written as a rational function with rational coefficients. 

\begin{ex}
    \label{ex:equiv HS}
(i) Let $c_1 = \cdots = c_q = 1$. Then \Cref{prop:explict computation} gives
\[
equivH_{\mathscr{I}} (s_1,s_2,\dots,s_{q}, t) 
=
\dfrac{s_1\dots s_q}{(1-s_1)\dots(1-s_q)-t}. 
\]
By the argument at the beginning of the proof of \Cref{lem:reduction},  this model has the same equivariant Hilbert series as the corresponding independence model (see \Cref{ex:indep model}). 

(ii) Let $q = c_1 = c_2 = 2$. Then \Cref{prop:explict computation} yields 
\begin{align*}
4 \cdot equivH_{\mathscr{I}} (s_1,s_2, t) 
& =  \frac{\sqrt{s_1 s_2}}{(1-\sqrt{s_1}) (1-\sqrt{s_2}) - t}  - \frac{\sqrt{s_1 s_2}}{(1-\sqrt{s_1}) (1+\sqrt{s_2}) - t}  \\
& \hspace*{.5cm}  - \frac{\sqrt{s_1 s_2}}{(1+\sqrt{s_1}) (1-\sqrt{s_2}) - t}    
+ \frac{\sqrt{s_1 s_2}}{(1+\sqrt{s_1}) (1+\sqrt{s_2}) - t}. 
\end{align*}
Now a straightforward computation gives
\begin{align*}
 equivH_{\mathscr{I}} (s_1,s_2, t) 
&  = \dfrac{s_1s_2(s_1s_2-s_1-s_2-t^2)}{f}, 
\end{align*}
where 
\begin{align*}
f & = s_1 s_2 (s_1 -2)(s_2 -2) + s_1 (s_1 -2) + s_2 (s_2 -2) \\
& \hspace*{.5cm}  - 2t^2(s_1s_2+s_1+s_2)-4t(s_1s_2-s_1-s_2)+(1-t)^4. 
\end{align*}
\end{ex}

\vspace*{.5cm}


There is an alternative method to determine the equivariant Hilbert series whose rationality is 
guaranteed by \Cref{prop:second step}. It directly produces a rational function with rational 
coefficients. This approach applies to any equivariant Hilbert series that is equal to the generating function 
$P_{\cL, \rho}$ determined by a weight function $\rho$ on a regular language $\cL$. 
Indeed, let $\cA = (P, \Sigma, \delta, p_0, F)$ be a finite automaton that recognizes $\cL$. Suppose $P$ has $N$ elements $p_0,\ldots,p_{N-1}$. For every letter $a \in \Sigma$ define a  $0-1$ matrix $M_{\cA, a}$ of size $N \times N$. Its entry at position $(i , j)$ is 1 precisely if there is a transition $\delta (p_j, a) = p_i$. Let $\mathbf{e}_i \in \mathbb{K}^N$ be the canonical basis vector corresponding to   state $p_{i-1}$.  Let $\mathbf{u}=\sum\limits_{p_{i-1} \in F} \mathbf{e}_i \in \mathbb{K}^N$ be the sum of the basis vectors corresponding to the accepting states. Then, for any word $w=w_1\dots w_d$ with $w_i \in \Sigma$, one has 
\[
\mathbf{u}^{T} M_{\cA,w_d}\dots A_{\cA,w_1} \mathbf{e}_1  =\begin{cases} 
      1 & \text{ if } \cA \text{ accepts } w \\
      0 & \text{ if }  \cA \text{ rejects } w.
   \end{cases}
   \]
Let $\rho: \Sigma^*\rightarrow \Mon (\K[s_1,\ldots,s_k])$ be a weight function. Thus, $\rho (w_1 w_2) = \rho (w_1) \cdot \rho (w_2)$  for any $w_1, w_2 \in \Sigma^*$. It follows (see, e.g, \cite[Section 4.7]{St}): 
\begin{align*}
P_{\cL,\rho} (s_1,\dots,s_k) & =\sum_{w\in\cL} \rho(w)  
=  \sum_{d\ge 0} \sum_{w_1,\ldots,w_d \in \Sigma}  \mathbf{u}^T \left( \rho (w_1\dots w_d)  M_{\cA,w_d}\dots A_{\cA,w_1} \right ) \mathbf{e}_1 \\
& =\sum_{d \ge 0} \mathbf{u}^T\left(\sum_{a\in\Sigma} \rho(a) M_{\cA,a} \right)^d \mathbf{e}_1 
= \mathbf{u}^T \left( \id_{N}-\sum_{a\in\Sigma} \rho(a) M_{\cA,a} \right)^{-1}\mathbf{e}_1. 
\end{align*}
Thus, the generating function $P_{\cL,\rho} (s_1,\dots,s_k)$ is rational with rational coefficients and can be explicitly computed from the  automaton $\cA$ using linear algebra.  

In the proof of \Cref {prop:second step}, we showed (see Equation \eqref{eq:hilb is gen function}) that the equivariant Hilbert series of a considered filtration is, up to a degree shift, equal to a generating function. Hence, the above approach can be used to compute directly this Hilbert series as a rational function with rational coefficients. We implemented  the resulting algorithm in Macaulay2 \cite{GS}. It is posted at {\tt \href{http://www.sites.google.com/view/aidamaraj/home/research}{\color{black}{http://www.sites.google.com/view/aidamaraj/home/research}}}. 

\begin{ex} 
In \Cref{prop:second step}, consider the case where $\bc=(1,1\dots,1) \in \N^q$. The automaton constructed in \Cref{prop:L recognizable} can be reduced to one with only $q+1$ states (see \Cref{rem: q = 3} if $q = 3$): 

\begin{tikzpicture}[shorten >=1pt,node distance=2.5cm,auto]
  \tikzstyle{every state}=[fill={rgb:black,1;white,10}]

  \node[state,initial,accepting]        (p_1)     {$p_1$}; 
   \node[state,accepting]           (p_2) [right   of=p_1]     {$p_2$};
   \node[]           (p_.) [right of=p_2]     {$\dots$};
  \node[state,accepting]           (p_3) [right of=p_.]     {$p_q$};
    \node[]           (p_4) [right of=p_3]     {};
  \node[state,accepting] (p_111) [below of=p_2] 
  {$p_{\one}$};
  \path[->]
   (p_1)   edge  [loop above]          node {$\tau_1$} (p_1)
     (p_2)   edge  [loop above]         node {$\tau_{2}$} (p_2)
      (p_3)   edge  [loop above]          node {$\tau_{q}$} (p_3)
       (p_111)   edge  [loop below]          node {$\zeta$} (p_111)
  (p_1)   edge            node {$\tau_2$} (p_2)
   (p_1)   edge   [bend left]            node {$\tau_q$} (p_3)
    (p_2)   edge       [bend left]      node {$\tau_q$} (p_3)
   (p_1)   edge               node {$\zeta$} (p_111)
    (p_3)   edge   [bend left]   node {$\zeta$} (p_111)
    (p_2)   edge      [bend left]          node {$\zeta$} (p_111)
       (p_111)   edge     [bend left]         node {$\tau_{1}$} (p_1)
      (p_111)   edge              node {$\tau_{2}$} (p_2)
      (p_111)   edge            node {$\tau_{q}$} (p_3)
    ;      
\end{tikzpicture}

Hence, listing $p_{\one}$ as the last state we obtain for the equivariant Hilbert series of the filtraction $\CI$: 
\begin{align*}
\hspace{1em}&\hspace{-1em}  
{equivH}_{\mathscr{I}}(s_1,\ldots,s_q,t)  \\
& = s_1s_2\cdots s_q \cdot  \mathbf{u}^T \left (\id_{q+1}-\sum_{a \in\Sigma} \rho (a)  M_{\cA,w} \right )^{-1} \mathbf{e}_1\\
& = s_1s_2 \cdots s_q \begin{bmatrix}
1   \\
     1   \\
     \vdots   \\     
      1  
\end{bmatrix}^T
\begin{bmatrix}
1- s_1 & 0 & 0 & \ldots & 0 & 0 & -s_1 \\
-s_2 & 1 - s_2 & 0 & \ldots & 0  & 0 & -s_2 \\
-s_3 & -s_3 & 1 - s_3 & \ldots & 0 & 0 & -s_3 \\
 \vdots  &\vdots &  \vdots  &      \ddots &  \vdots & \vdots  &  \vdots \\ 
 -s_{q-1} & -s_{q-1} & -s_{q-1} & \ldots &  1-s_{q-1} & 0 &  -s_{q-1} \\
 - s_q & - s_q & - s_q & \ldots & -s_q & 1 - s_q & - s_q \\
  -t   &  -t  & -t  & \dots & -t & -t  &1 - t
\end{bmatrix}^{-1}
\begin{bmatrix}
   1   \\
     0   \\
     \vdots\\
      0   
\end{bmatrix} \\
&= \dfrac{s_1\cdots s_q}{(1-s_1) \cdots(1-s_q)-t}, 
\end{align*}
where the first column of the  inverse matrix can be determined using suitable minors. 
Of course, the result is the same as in 
\Cref{ex:equiv HS}. 
\end{ex}



\end{document}